\theoremstyle{plain}
\newtheorem{theorem}{Theorem}[section]
\newtheorem{maintheorem}{Main Theorem}[section]
\newtheorem{proposition}[theorem]{Proposition}
\newtheorem{lemma}[theorem]{Lemma}
\newtheorem{corollary}[theorem]{Corollary}
\newtheorem{definition}[theorem]{Definition}
\newtheorem{conjecture}{Conjecture}[section]
\begin{document}
\title{on conditional connectivity of  the Cartesian product of  cycles }
\author{ J. B. Saraf$^1$, Y. M. Borse$^2$ and Ganesh Mundhe$^3$}
\address{1. Amruteshwar Arts, Commerce and Science College, Vinzar- 412211, INDIA }
\address{2. Department of Mathematics, SPPU, Pune-411007, INDIA}
\address{3. Army Institute of Technology, Pune-411015, INDIA }
\email{1. sarafjb@gmail.com, 2. ymborse11@gmail.com, 3. ganumundhe@gmail.com}
\maketitle
\baselineskip18truept
\begin{abstract}
The conditional $h$-vertex($h$-edge) connectivity of a connected graph $H$ of minimum degree $ k > h$ is the size of a smallest vertex(edge) set $F$ of $H$ such that  $H - F$ is a disconnected graph of minimum degree at least $h.$ Let $G$ be the Cartesian product of $r\geq 1$ cycles, each of length at least four and let $h$ be an integer such that $0\leq h\leq 2r-2$. In this paper, we determine the conditional $h$-vertex-connectivity and the conditional $h$-edge-connectivity of the graph $G.$  We prove that both these connectivities are equal to  $(2r-h)a_h^r$, where $a_h^r$ is the number of vertices of a smallest $h$-regular subgraph of $G.$
\end{abstract}
\vskip.2cm
 \noindent \textbf{Keywords:}  fault tolerance, hypercube, conditional connectivity, cut, Cartesian product
\vskip.2cm \noindent
{\bf Mathematics Subject Classification (2010):} 05C40,  68R10
\section{Introduction}
One of the feature of a good interconnection network is its high fault tolerance capacity. Interconnection network can be modelled into a graph with the help of which we can study many properties of the network. Connectivity of a modelled graph measures the fault tolerance capacity of the interconnection
network. High fault tolerance capacity of the network plays an important role in practice. Traditional
connectivities have some limitations to measure the fault tolerance capacity of a network accurately. In order
to compute traditional edge connectivity, one allows failure of all the links incident with the same processor,
practically which is rare. This affect the reliability of the network. One can overcome these limitations effectively by considering the conditional
connectivity of  graphs introduced by Harary \cite{harary}.

Let $G$ be a connected graph with minimum degree at least $k\geq 1$ and let $h$ be an integer such that
$0\leq h <k $. A set $F$ of edges(vertices) of $G$ such that $G-F$ is disconnected and each component of it has minimum
degree at least $h$ is an \textit{$h$-edge(vertex) cut} of $G$. The \textit{conditional $h$-edge(vertex)} connectivity of $G,$ denoted by $\lambda^h(G)$ ($\kappa^h(G)$), is the minimum cardinality $|F|$ of an $h$-edge(vertex) cut $F$ of $G.$  Clearly, $h = 0$ gives the traditional edge(vertex) connectivity.

  Many researcher   have worked on the  problem of determining  the conditional connectivities for various classes of graphs and determined these parameters for smaller values of $h$ \cite{esfa 89, esfa 88,xu,  latifi}.  Exact values of one or both conditional connectivities are known for some classes of graphs.  For the $n$-dimensional hypercube $Q_n$, the conditional connectivities $\lambda^h$ and $\kappa^h$ are same and their common value is $2^h(n-h);$ see \cite{duksu, xu}.  Li and Xu \cite{ li1} proved that  $\lambda^h$  of any $n$-dimensional hypercube-like network $G_n$ is also  $2^h(n-h).$   Ye and Liang \cite{ye} obtained a lower bound on the conditional $h$-vertex connectivity $\kappa^h$ of the graph $G_n$ and  established that $\kappa^h$ is $2^h(n-h)$ for  some  members of hypercube-like networks such as Crossed cubes, Locally twisted cubes, M\"{o}bius cubes. Independently,  Wei and Hsieh \cite{wei} determined $\kappa^h$ for the Locally twisted cubes.
Ning \cite{ning} obtained $\kappa^h$ for the exchanged crossed cubes. Both  $\lambda^h$ and  $\kappa^h$ are determined for the class of $(n,~k)$-star graphs by Li et al. \cite{li2}.

An $r$-dimensional torus is the Cartesian product of $r$ cycles. The \textit{ $k$-ary $r$-cube,} denoted by  $Q^k_r,$ is the Cartesian product of $r$ cycles each of length $k.$  In particular, the hypercube $Q_{2r}$ is  $Q^4_r.$   The multidimensional torus, the $k$-ary $r$-cube and the hypercube are widely used interconnection  networks; see \cite{ch, le, lw, xxh}.

It is easy to see that an  $r$-dimensional torus
is a $2r$-regular graph with traditional vertex connectivity and edge connectivity  $2r;$ see \cite{xxh}.  In this paper, we determine the conditional $h$-edge-connectivity as well as the conditional $h$-vertex-connectivity of the given multidimensional torus.

By $ C_k$ we mean a cycle of length $k.$ For integers $h,r,k_1,k_2,\dots, k_r$ with $0\leq h\leq 2r$ and $4\leq k_1\leq k_2\leq \dots \leq k_r,$ we define a quantity $a_h^r$ as follows,
\begin{definition}\label{ahr}
\[a_h^r= \left\{ \begin{array}{ll}
           2^h & \mbox{if $0\leq h \leq r$}\\\\
         2^{r-i}~k_1k_2\cdots k_i & \mbox{if $h=r+i,$~ $1\leq i \leq r $}.\end{array} \right. \] 

\end{definition}
 
We prove that both the conditional connectivities $\lambda^h$ and $k^h$ are equal to $a_h^r(2r-h)$ for the Cartesian product of cycles  $ C_{k_1}, C_{k_2}, \dots, C_{k_r}.$ 

The following is the main theorem of the paper.
\begin{maintheorem}\label{main theorem}
Let $h,~r,~k_1,~k_2,\dots, k_r$ be integers such that $0\leq h \leq 2r-2 $ and $4\leq k_1 \leq k_2\leq \dots \leq k_r$ and let  $ G $ be the Cartesian product of the cycles $ C_{k_1}, C_{k_2}, \dots, C_{k_r}.$  Then $\lambda^h(G)=k^h(G) = a_h^r(2r-h).$ 
\end{maintheorem}
\begin{corollary}
Let $h,~r, ~k$ be integers such that $0\leq h \leq 2r-2,$ $ 4 \leq k$ and let $ Q_r^k $ be the $k$-ary $r$-cube. Then $\lambda^h(Q_r^k)=k^h(Q_r^k) = a_h^r(2r-h),$ where $a_h^r= 2^h$ if $0\leq h \leq r$ and $a_h^r=2^{r-i}k^i$ if  $h=r+i$ and $1\leq i \leq r.$
\end{corollary}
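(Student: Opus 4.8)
The Corollary is the case $k_1=\cdots=k_r=k$ of the Main Theorem (then $a_h^r=2^h$ for $0\le h\le r$ and $a_{r+i}^r=2^{r-i}k^i$), so I will describe a proof of the Main Theorem, after which the Corollary is immediate. The plan is to prove the two matching inequalities $\lambda^h(G),\kappa^h(G)\le a_h^r(2r-h)$ by exhibiting explicit cuts, and $\lambda^h(G),\kappa^h(G)\ge a_h^r(2r-h)$ by an isoperimetric lower bound.

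For the upper bounds I exhibit a smallest $h$-regular subgraph $H$ and cut along its boundary. For $0\le h\le r$ let $H$ be the product of one edge from each of $h$ of the cycles (a copy of $Q_h$, with $2^h=a_h^r$ vertices); for $h=r+i$ let $H=C_{k_1}\,\square\,\cdots\,\square\,C_{k_i}\,\square\,K_2^{\,r-i}$ (the $i$ shortest cycles times $r-i$ edges, with $a_h^r=2^{r-i}k_1\cdots k_i$ vertices). As $H$ is $h$-regular and $G$ is $2r$-regular, each vertex of $H$ sends exactly $2r-h$ edges out of $H$, and since $k_j\ge 4$ one checks that every vertex of $G-V(H)$ has at most one neighbour in $H$; hence $H$ has exactly $a_h^r(2r-h)$ boundary edges, all landing on distinct vertices. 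Deleting these edges leaves $H$ (of minimum degree $h$) and a part in which each vertex loses at most one edge, so its minimum degree is $\ge 2r-1\ge h$; this gives $\lambda^h\le a_h^r(2r-h)$. For $\kappa^h$ delete instead the outer vertex boundary $S=N(V(H))\setminus V(H)$, which also has $a_h^r(2r-h)$ vertices; only vertices at distance two from $H$ have any neighbour in $S$, and at most two, so every surviving vertex keeps degree $\ge 2r-2\ge h$ (here $h\le 2r-2$ is used), giving $\kappa^h\le a_h^r(2r-h)$.

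The core is the edge-isoperimetric claim: if $\emptyset\ne A\subsetneq V(G)$ and both $G[A]$ and $G[V\setminus A]$ have minimum degree $\ge h$, then $|E(A,V\setminus A)|\ge a_h^r(2r-h)$. I argue by induction on $r$, writing $G=G'\,\square\,C_{k_r}$ with layers $L_0,\dots,L_{k_r-1}\cong G'=C_{k_1}\,\square\,\cdots\,\square\,C_{k_{r-1}}$ and $A_t=\{x:(x,t)\in A\}$, so that
\[|E(A,V\setminus A)|=\sum_{t}|\partial_{G'}A_t|+\sum_{t}|A_t\,\triangle\,A_{t+1}|.\]
In a mixed layer ($\emptyset\ne A_t\ne V(G')$) every vertex retains horizontal degree $\ge h-2$ on each side, so the induction hypothesis (dimension $r-1$, parameter $h-2$) bounds that layer's horizontal boundary below by $a_{h-2}^{r-1}(2r-h)$. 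The extremal configuration is $H$ itself: two adjacent mixed layers, each an $(h-1)$-regular subgraph $H'$ of $G'$, contributing horizontal boundary $2a_{h-1}^{r-1}(2r-h-1)$ and vertical boundary $2a_{h-1}^{r-1}$, which sum to $2a_{h-1}^{r-1}(2r-h)=a_h^r(2r-h)$ via the recursion $a_h^r=2a_{h-1}^{r-1}$. The main obstacle is precisely that this recursion is not reproduced by the induction hypothesis alone: a mixed layer only inherits a degree-$(h-2)$ bound, whereas the extremal layers are $(h-1)$-regular, so the horizontal estimate falls short and the deficit must be recovered from the inter-layer edges. Closing this gap calls for a case analysis on the cyclic pattern of empty, full and mixed layers—when there are many mixed layers the hypothesis already suffices, while when there are few, a single transition between a full and an empty layer contributes $k_1\cdots k_{r-1}\ge 4^{\,r-1}$ edges, which dominates—so that the horizontal and vertical contributions together never drop below $a_h^r(2r-h)$. (Alternatively one may invoke a Bollob\'{a}s--Leader type edge-isoperimetric inequality for tori together with the size bound $|A|\ge a_h^r$ forced by the minimum-degree condition.)

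The inequality gives $\lambda^h\ge a_h^r(2r-h)$ directly. For $\kappa^h$, take a minimum $h$-vertex-cut $S$ and a component $A$ of $G-S$; since $N(A)\setminus A\subseteq S$ and each vertex of $A$ sends at most $2r-h$ edges into $S$, running the same slicing argument in its vertex form yields $|S|\ge|N(A)\setminus A|\ge a_h^r(2r-h)$. Together with the upper bounds this gives $\lambda^h(G)=\kappa^h(G)=a_h^r(2r-h)$, and the Corollary follows on setting $k_1=\cdots=k_r=k$.
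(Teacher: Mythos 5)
Your reduction of the Corollary to the Main Theorem (setting $k_1=\cdots=k_r=k$) is exactly how the paper obtains it, and your upper-bound constructions coincide with the paper's: the subgraph you call $H$ is the paper's $W_h^r$, your edge cut is its boundary $E_r(W_h^r)$ (Lemma 4.1), and your vertex cut $N(V(W_h^r))$ is Lemma 3.1, justified by the same two facts you cite (each outside vertex has at most one neighbour in $W_h^r$, and each vertex outside $N[V(W_h^r)]$ has at most two neighbours in $N[V(W_h^r)]$; the paper's Lemmas 2.6 and 2.7). Your lower bound also follows the paper's outline: induction on $r$, slicing $G=H\Box C_{k_r}$ into layers, and splitting the boundary into horizontal and vertical parts.

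However, there is a genuine gap, and it sits exactly where you flag it. You observe that a mixed layer only inherits a degree bound of $h-2$, so the naive induction falls short of the target $a_h^r(2r-h)=2a_{h-1}^{r-1}(2r-h)$, and you propose to recover the deficit by a dichotomy: ``many mixed layers'' versus ``a full-to-empty transition contributing $k_1\cdots k_{r-1}$ edges.'' This dichotomy fails precisely at the extremal configuration you yourself describe: $Y\cong W_h^r$ occupies two adjacent layers as $(h-1)$-regular subgraphs and all other layers miss $Y$ entirely --- there is no full layer, hence no full-to-empty transition, and only two mixed layers. Nor can you apply the cut-form induction hypothesis at parameter $h-1$ to those two layers, because that hypothesis requires \emph{both} sides of the cut to have minimum degree at least $h-1$ inside the layer, while the trace of the complement $Z$ is only guaranteed degree $h-2$ there. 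The paper's device for closing exactly this gap is a pair of one-sided lemmas (its Lemmas 3.2 and 4.2): for any subgraph $Y$ of $G$ with $\delta(Y)\geq h$ --- no condition on the complement --- one has $|N[Y]|\geq a_h^r(2r-h+1)$, respectively $|V(Y)|+|E_r(Y)|\geq a_h^r(2r-h+1)$. These are proved by a separate induction and are then applied to the first and last nonempty layers of $Y$ (where the trace has degree $\geq h-1$), yielding $a_{h-1}^{r-1}(2r-h)$ per such layer plus $a_{h-1}^{r-1}$ vertical matching edges per such layer, which sums to $2a_{h-1}^{r-1}(2r-h)=a_h^r(2r-h)$ by the recursion $a_h^r=2a_{h-1}^{r-1}$. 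Without this strengthened one-sided induction statement (or a worked-out substitute), your sketch does not close. The appeal to a Bollob\'as--Leader isoperimetric inequality is not a substitute either: the cycles here may have distinct lengths, you need a vertex as well as an edge version, and verifying that the isoperimetric profile at sizes at least $a_h^r$ (with both sides large) stays above $a_h^r(2r-h)$ is essentially the content of what is to be proved. Finally, for the vertex lower bound the paper's Proposition 3.3 must track the layer pattern of both $Y$ and $Z$ simultaneously (its Case 2.2 and subcases), a complication that your one-line ``same slicing argument in its vertex form'' elides.
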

\begin{corollary} [\cite {duksu, xu}]
 For integers  $h$ and $r$ with $0\leq h \leq 2r-2 ,$  $\lambda^h(Q_{2r})=k^h(Q_{2r}) = 2^h(2r-h).$
\end{corollary}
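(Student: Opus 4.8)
The plan is to derive this statement as an immediate specialization of the preceding corollary on the $k$-ary $r$-cube. First I would recall, as noted in the introduction, that the hypercube $Q_{2r}$ is precisely the $4$-ary $r$-cube $Q_r^4$, namely the Cartesian product of $r$ copies of $C_4$. Since $4\geq 4$, the hypotheses of the corollary for $Q_r^k$ are satisfied with $k=4$ over the same range $0\leq h\leq 2r-2$, so I may apply that corollary directly to obtain $\lambda^h(Q_{2r})=k^h(Q_{2r})=a_h^r(2r-h)$.

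The only point requiring attention is to verify that the quantity $a_h^r$ of Definition~\ref{ahr} collapses to $2^h$ for every admissible $h$ once $k=4$. I would split into the two cases of that definition. In the first case $0\leq h\leq r$, the definition gives $a_h^r=2^h$ directly, with no dependence on the cycle lengths. In the second case $h=r+i$ with $1\leq i\leq r$, the definition gives $a_h^r=2^{r-i}k^i$; substituting $k=4=2^2$ yields $a_h^r=2^{r-i}\cdot 2^{2i}=2^{r+i}=2^h$. Hence both branches agree on the single value $2^h$.

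Combining these, $a_h^r(2r-h)=2^h(2r-h)$ in every case, which is the claimed common value of $\lambda^h(Q_{2r})$ and $k^h(Q_{2r})$. There is no genuine obstacle here: the whole argument is a one-line substitution into the previous corollary, and the only computation, $2^{r-i}\cdot 4^i=2^{r+i}$, is routine. The conceptual content resides entirely in the earlier results; this statement merely records the hypercube as the arithmetically cleanest instance, recovering the known value $2^h(2r-h)$ of \cite{duksu, xu}.
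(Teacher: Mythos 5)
Your proposal is correct and follows exactly the route the paper intends: the corollary is stated as an immediate consequence of the preceding result on $k$-ary $r$-cubes, using the identification $Q_{2r}=Q_r^4$ noted in the introduction together with the observation that both branches of Definition~\ref{ahr} give $a_h^r=2^{r-i}4^i=2^{r+i}=2^h$ when every cycle has length $4$. Nothing is missing; your verification of the second branch is precisely the only computation required.
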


 

In Section 2, we prove that a smallest $h$-regular subgraph of the graph $G$ of the above theorem has $a_h^r$ vertices. We also derive some of its properties. In Section 3, we obtain vertex connectivity and in Section 4, we determine the conditional edge connectivity of  $G$. 
  
  \section{Smallest $h$-regular subgraph}
  In this section, we define a smallest $h$-regular subgraph of the Cartesian product of $r$-cycles and obtain some properties of it.
  
   The \textit{Cartesian product} of two graphs $G$ and $H$ is a graph $ G \Box H$ with vertex set $ V(G)\times V(H).$ Two vertices $(x, y)$ and $ (u, v)$ are adjacent in $ G \Box H$ if and only if either $ x = u$ and $ y $ is adjacent to $v$ in $H,$ or $ y = v$ and $ x $ is adjacent to $u$ in $G.$  The hypercube $Q_n$ is the Cartesian product of $n$ copies of the complete graph $K_2.$

   {\bf Notation:} Consider the graph $G$ of Main Theorem \ref{main theorem}. We have $G = C_{k_1}\Box C_{k_2}\Box \cdots \Box C_{k_r},$ where $C_{k_i}$ is a cycle of length $k_i$ for $i=1,2,\cdots,k_r$ and $4 \leq k_1\leq k_2\leq \dots \leq k_r.$  We can write $G$ as $G=H\Box C_{k_r},$ where $H=C_{k_1}\Box C_{k_2}\Box \cdots \Box C_{k_{r-1}}.$  Label by $1, 2, \dots , k_r$ the vertices of the cycle $C_{k_r}$ so that $i$ is adjacent to $(i+1)\pmod {k_r}.$ Hence $G$ can be obtained by replacing $i^{th}$ vertex  of $C_{k_r}$ by a copy $H^i$ of $H$ and replacing edge joining $i$ and $i+1$ of $C_{k_r}$ by the perfect matching $M_i$ between the corresponding vertices of $H^i$ and $H^{i+1}.$ Thus $G= H^1\cup H^2\cup \dots \cup H^{k_r}\cup (M_1\cup M_2 \cup \dots \cup M_{k_r})$ (see Figure 1).        \begin{center}
      \unitlength 1mm 
      \linethickness{0.4pt}
      \ifx\plotpoint\undefined\newsavebox{\plotpoint}\fi 
      \begin{picture}(74.528,57.278)(0,0)
      \put(-.722,6.528){\framebox(9.5,26.25)[cc]{}}
      \put(31.778,6.528){\framebox(9.5,26.25)[cc]{}}
      \put(14.778,6.528){\framebox(9.5,26.25)[cc]{}}
      \put(47.278,6.528){\framebox(9.5,26.25)[cc]{}}
      \put(65.028,6.528){\framebox(9.5,26.25)[cc]{}}
      \put(8.778,28.278){\line(1,0){6}}
      \put(41.278,28.278){\line(1,0){6}}
      \put(9.028,23.778){\line(1,0){5.75}}
      \put(41.528,23.778){\line(1,0){5.75}}
      \put(8.778,13.778){\line(1,0){6}}
      \put(41.278,13.778){\line(1,0){6}}
      \put(12.028,20.028){\makebox(0,0)[cc]{$\vdots$}}
      \put(28.636,19.818){\makebox(0,0)[cc]{$\vdots$}}
      \put(60.801,19.818){\makebox(0,0)[cc]{$\vdots$}}
      \put(44.528,20.028){\makebox(0,0)[cc]{$\vdots$}}
      \qbezier(.028,32.778)(34.028,57.278)(73.028,32.778)
      \qbezier(3.028,33.028)(32.903,50.528)(69.278,33.028)
      \qbezier(8.278,32.778)(35.278,40.903)(65.278,32.528)
      \put(36.352,39.318){\makebox(0,0)[cc]{$\vdots$}}
     
      \put(24.736,28.31){\line(1,0){.9724}}
      \put(26.681,28.31){\line(1,0){.9724}}
      \put(28.626,28.31){\line(1,0){.9724}}
      \put(30.57,28.31){\line(1,0){.9724}}
      \put(24.736,23.685){\line(1,0){.9724}}
      \put(26.681,23.685){\line(1,0){.9724}}
      \put(28.626,23.685){\line(1,0){.9724}}
      \put(30.57,23.685){\line(1,0){.9724}}
      \put(24.736,14.225){\line(1,0){.9724}}
      \put(26.681,14.225){\line(1,0){.9724}}
      \put(28.626,14.225){\line(1,0){.9724}}
      \put(30.57,14.225){\line(1,0){.9724}}
      \put(57.531,28.31){\line(1,0){.9724}}
      \put(59.476,28.31){\line(1,0){.9724}}
      \put(61.421,28.31){\line(1,0){.9724}}
      \put(63.365,28.31){\line(1,0){.9724}}
      \put(57.531,23.685){\line(1,0){.9724}}
      \put(59.476,23.685){\line(1,0){.9724}}
      \put(61.421,23.685){\line(1,0){.9724}}
      \put(63.365,23.685){\line(1,0){.9724}}
      \put(57.531,14.225){\line(1,0){.9724}}
      \put(59.476,14.225){\line(1,0){.9724}}
      \put(61.421,14.225){\line(1,0){.9724}}
      \put(63.365,14.225){\line(1,0){.9724}}
       \put(3.278,3.528){\makebox(0,0)[cc]{$H^1$}}
            \put(19.028,3.778){\makebox(0,0)[cc]{$H^2$}}
            \put(36.028,3.528){\makebox(0,0)[cc]{$H^t$}}
            \put(51.278,3.528){\makebox(0,0)[cc]{$H^{t+1}$}}
            \put(69.528,4.028){\makebox(0,0)[cc]{$H^{k_r}$}}
            \put(11.278,10.278){\makebox(0,0)[cc]{$M_1$}}
            \put(37.062,47.568){\makebox(0,0)[cc]{$M_{k_r}$}}
            \put(44.278,10.528){\makebox(0,0)[cc]{$M_t$}}
      \put(36.579,-3){\makebox(0,0)[cc]{Figure 1. $G=H \Box C_{k_r}$}}
      \end{picture}
      
        \end{center}
   \vskip.5cm
   From the following lemma, it is clear that $G$ is a $2r$-regular and $2r$-connedcted graph on $k_1k_2\dots k_r$ vertices. 
   \begin{lemma}\label{regular} 
   	If $G_i$ is a $m_i$-regular and $m_i$-connected graph on $n_i$ vertices for $i=1,2$, then $G_1\Box G_2$ is an $(m_1+m_2)$-regular and  $(m_1+m_2)$-connected graph on $n_1n_2$ vertices.
   \end{lemma}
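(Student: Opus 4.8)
The plan is to establish the three assertions in turn, the first two being routine and the connectivity bound carrying all the content. Write $G=G_1\Box G_2$ and recall from the definition of the Cartesian product that $V(G)=V(G_1)\times V(G_2)$, so $G$ has $n_1n_2$ vertices. For regularity, note that the neighbours of a vertex $(x,y)$ are exactly the pairs $(x',y)$ with $x'$ adjacent to $x$ in $G_1$ together with the pairs $(x,y')$ with $y'$ adjacent to $y$ in $G_2$; hence $\deg_G(x,y)=\deg_{G_1}(x)+\deg_{G_2}(y)=m_1+m_2$, and $G$ is $(m_1+m_2)$-regular. Since the vertex connectivity of any graph is at most its minimum degree, this already gives $\kappa(G)\le m_1+m_2$, so it remains only to prove the lower bound $\kappa(G)\ge m_1+m_2$.

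For the lower bound I would argue that no vertex set $S$ with $|S|\le m_1+m_2-1$ can disconnect $G$. Decompose $G$ into the $n_1$ copies $L_u=\{u\}\times V(G_2)$ of $G_2$, one for each $u\in V(G_1)$, and set $S_u=S\cap L_u$, so that $\sum_u|S_u|=|S|$. Call a layer $L_u$ \emph{good} if $|S_u|<m_2$ and \emph{bad} otherwise, and let $A=\{u:|S_u|\ge m_2\}$ be the set of bad layers. Because $G_2$ is $m_2$-connected, $L_u-S_u$ is connected and nonempty (the latter since $|S_u|<m_2\le n_2-1$) for every good layer. The crucial counting step is that there are few bad layers: from $|A|\,m_2\le\sum_{u\in A}|S_u|\le m_1+m_2-1$ together with $m_1m_2-(m_1+m_2-1)=(m_1-1)(m_2-1)\ge 1$ for $m_1,m_2\ge 2$ (the cases $m_i=1$, where the factor is $K_2$, being handled directly) one gets $|A|<m_1$. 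Since $G_1$ is $m_1$-connected, $G_1-A$ is therefore connected; equivalently, the good layers form a connected ``backbone'' in the $G_1$-direction.

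It then remains to show that $G-S$ is connected. All vertices lying in good layers fall into a single component: each good layer is internally connected, and two good layers $L_u,L_{u'}$ whose indices are adjacent in $G_1-A$ are joined by a surviving matching edge whenever $|S_u|+|S_{u'}|<n_2$, which lets one walk between any two good layers along a path of the backbone. Finally each bad layer must be attached to this component through the matching edges leaving it; here one uses that a bad layer still has $n_2-|S_u|$ surviving vertices and $m_1$ matching bundles to adjacent layers, while $S$ is too small to sever all of them. I expect this last point---guaranteeing a surviving connection across a layer boundary when the deleted set is concentrated near one or two layers---to be the main obstacle, since a crude count of killed matching edges can fail when a factor is small (for instance $G_2=K_{m_2+1}$); one repairs it by routing through the many alternative backbone paths supplied by the $m_1$-connectivity of $G_1$ rather than insisting on a single edge between a fixed pair of layers.

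Alternatively, and more cleanly, one may invoke the known formula $\kappa(G_1\Box G_2)=\min\{\kappa(G_1)\,n_2,\ \kappa(G_2)\,n_1,\ \delta(G_1)+\delta(G_2)\}$ for the connectivity of a Cartesian product of graphs. Substituting $\kappa(G_i)=\delta(G_i)=m_i$ and using $n_i\ge m_i+1$ gives $m_1n_2\ge m_1(m_2+1)\ge m_1+m_2$ and symmetrically $m_2n_1\ge m_1+m_2$, so the minimum equals $m_1+m_2$, exactly the bound sought. Either route yields $\kappa(G)=m_1+m_2$, and combined with the vertex count and the regularity computed above this completes the proof.
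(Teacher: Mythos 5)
The paper offers no proof of this lemma at all: it is stated as a known fact (the connectivity of Cartesian products is classical; the paper cites Xu--Xu--Hou elsewhere for the corresponding statement about tori), so there is no in-paper argument to compare yours against. Your second, ``cleaner'' route --- quoting the formula $\kappa(G_1\Box G_2)=\min\{\kappa(G_1)n_2,\ \kappa(G_2)n_1,\ \delta(G_1)+\delta(G_2)\}$ of \v{S}pacapan and checking that the minimum equals $m_1+m_2$ because $n_i\ge m_i+1$ --- is correct and complete modulo that citation, and it matches the spirit of the paper's treatment; in fact the older and weaker inequality $\kappa(G_1\Box G_2)\ge\kappa(G_1)+\kappa(G_2)$ (Sabidussi) already suffices once you note $\kappa(G)\le\delta(G)=m_1+m_2$. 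The vertex count, the regularity computation, and the upper bound $\kappa\le m_1+m_2$ are all correct as written.

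Your first, self-contained route, however, has a genuine gap, which you yourself flag. After establishing that the good layers are indexed by the connected graph $G_1-A$, you need a surviving matching edge between adjacent good layers, and the sufficient condition $|S_u|+|S_{u'}|<n_2$ can genuinely fail: two good layers may each carry up to $m_2-1$ faults, and $2(m_2-1)\ge n_2$ already for $G_2=K_{m_2+1}$ with $m_2\ge 3$. Likewise the attachment of the bad layers to the backbone is only asserted (``$S$ is too small to sever all of them''), not proved; a bad layer can have all of its surviving vertices' neighbours into one adjacent layer deleted, so one must argue globally, not boundary by boundary. The proposed repair --- rerouting through the many alternative backbone paths furnished by the $m_1$-connectivity of $G_1$ --- is exactly where the real work lies in the published proofs, and it is not carried out here. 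So the direct argument, as written, is not a proof; the proposal stands on its appeal to the known product-connectivity theorem, which is a legitimate move and is effectively what the paper itself does by stating the lemma without proof.
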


    
  \textbf { Henceforth, by the graph $G$ we mean the graph $C_{k_1}\Box C_{k_2}\Box \cdots \Box C_{k_{r}}$ with $ 4 \leq k_1 \leq k_2\leq \dots \leq k_{r}.$}

   We now define a  $h$-regular subgraph, denoted by $W_h^r,$ of the graph $G$  as follows. 
   \begin{definition}\label{whr} For $4 \leq k_1\leq k_2\leq \dots \leq k_r$ and $0\leq h \leq 2r$, let
     \[W_h^r= \left\{ \begin{array}{ll}
              Q_h & \mbox{if $0\leq h \leq r$}\\\\
            Q_{r-i}\Box C_{k_1}\Box \textbf{}C_{k_2} \Box \cdots\Box C_{k_i} & \mbox{if $h=r+i$~ and ~ $1\leq i \leq r $}.\end{array} \right .\] 
   \end{definition}
    
 In the following two figures, a 2-regular subgraph $W_2^2$   and a 3-regular subgraph $W_3^2$  of the graph $C_5 \Box C_5$ are shown by bold lines. 
  \begin{center}
  \unitlength 1mm 
  \linethickness{0.4pt}
  \ifx\plotpoint\undefined\newsavebox{\plotpoint}\fi 
  \begin{picture}(85.562,44.147)(0,0)
  \put(.335,39.818){\circle*{1}}
  \put(53.732,39.187){\circle*{1}}
  \put(6.011,39.857){\circle*{1}}
  \put(59.408,39.226){\circle*{1}}
  \put(.335,34.338){\circle*{1}}
  \put(53.732,33.707){\circle*{1}}
  \put(5.801,34.128){\circle*{1}}
  \put(59.198,33.497){\circle*{1}}
  \put(11.477,34.128){\circle*{1}}
  \put(64.874,33.497){\circle*{1}}
  \put(16.943,34.128){\circle*{1}}
  \put(70.34,33.497){\circle*{1}}
  \put(22.619,34.128){\circle*{1}}
  \put(76.016,33.497){\circle*{1}}
  \put(27.664,34.128){\circle*{1}}
  \put(81.061,33.497){\circle*{1}}
  \put(11.702,39.857){\circle*{1}}
  \put(65.099,39.226){\circle*{1}}
  \put(.335,28.858){\circle*{1}}
  \put(53.732,28.227){\circle*{1}}
  \put(5.801,28.648){\circle*{1}}
  \put(59.198,28.017){\circle*{1}}
  \put(11.477,28.648){\circle*{1}}
  \put(64.874,28.017){\circle*{1}}
  \put(16.943,28.648){\circle*{1}}
  \put(70.34,28.017){\circle*{1}}
  \put(22.619,28.648){\circle*{1}}
  \put(76.016,28.017){\circle*{1}}
  \put(27.664,28.648){\circle*{1}}
  \put(81.061,28.017){\circle*{1}}
  \put(.335,23.378){\circle*{1}}
  \put(53.732,22.747){\circle*{1}}
  \put(5.801,23.168){\circle*{1}}
  \put(59.198,22.537){\circle*{1}}
  \put(11.477,23.168){\circle*{1}}
  \put(64.874,22.537){\circle*{1}}
  \put(16.943,23.168){\circle*{1}}
  \put(70.34,22.537){\circle*{1}}
  \put(22.619,23.168){\circle*{1}}
  \put(76.016,22.537){\circle*{1}}
  \put(27.664,23.168){\circle*{1}}
  \put(81.061,22.537){\circle*{1}}
  \put(17.196,39.857){\circle*{1}}
  \put(70.593,39.226){\circle*{1}}
  \put(.335,17.898){\circle*{1}}
  \put(53.732,17.267){\circle*{1}}
  \put(5.801,17.688){\circle*{1}}
  \put(59.198,17.057){\circle*{1}}
  \put(11.477,17.688){\circle*{1}}
  \put(64.874,17.057){\circle*{1}}
  \put(16.943,17.688){\circle*{1}}
  \put(70.34,17.057){\circle*{1}}
  \put(22.619,17.688){\circle*{1}}
  \put(76.016,17.057){\circle*{1}}
  \put(27.664,17.688){\circle*{1}}
  \put(81.061,17.057){\circle*{1}}
  \put(22.676,39.857){\circle*{1}}
  \put(76.073,39.226){\circle*{1}}
  \put(.335,12.838){\circle*{1}}
  \put(53.732,12.207){\circle*{1}}
  \put(5.801,12.628){\circle*{1}}
  \put(59.198,11.997){\circle*{1}}
  \put(11.477,12.628){\circle*{1}}
  \put(64.874,11.997){\circle*{1}}
  \put(16.943,12.628){\circle*{1}}
  \put(70.34,11.997){\circle*{1}}
  \put(22.619,12.628){\circle*{1}}
  \put(76.016,11.997){\circle*{1}}
  \put(27.664,12.628){\circle*{1}}
  \put(81.061,11.997){\circle*{1}}
  \put(27.735,39.857){\circle*{1}}
  \put(81.132,39.226){\circle*{1}}
  \put(5.792,39.428){\line(0,-1){5.303}}
  \put(11.468,39.428){\line(0,-1){5.303}}
  \put(64.865,38.797){\line(0,-1){5.303}}
  \put(16.934,39.428){\line(0,-1){5.303}}
  \put(70.331,38.797){\line(0,-1){5.303}}
  \put(22.61,39.428){\line(0,-1){5.303}}
  \put(76.007,38.797){\line(0,-1){5.303}}
  \put(27.655,39.428){\line(0,-1){5.303}}
  \put(81.052,38.797){\line(0,-1){5.303}}
  \put(6.402,39.848){\line(1,0){5.303}}
  \put(59.799,39.217){\line(1,0){5.303}}
  \put(6.191,12.939){\line(1,0){5.303}}
  \put(59.588,12.308){\line(1,0){5.303}}
  \put(6.191,18.195){\line(1,0){5.303}}
  \put(59.588,17.564){\line(1,0){5.303}}
  \put(6.402,23.45){\line(1,0){5.303}}
  \put(59.799,22.819){\line(1,0){5.303}}
  \put(5.981,28.916){\line(1,0){5.303}}
  \put(59.378,28.285){\line(1,0){5.303}}
  \put(5.981,34.172){\line(1,0){5.303}}
  \put(59.378,33.541){\line(1,0){5.303}}
  \put(11.867,39.848){\line(1,0){5.303}}
  \put(65.264,39.217){\line(1,0){5.303}}
  \put(11.657,12.939){\line(1,0){5.303}}
  \put(65.054,12.308){\line(1,0){5.303}}
  \put(11.657,18.195){\line(1,0){5.303}}
  \put(65.054,17.564){\line(1,0){5.303}}
  \put(11.867,23.45){\line(1,0){5.303}}
  \put(65.264,22.819){\line(1,0){5.303}}
  \put(11.447,28.916){\line(1,0){5.303}}
  \put(64.844,28.285){\line(1,0){5.303}}
  \put(11.447,34.172){\line(1,0){5.303}}
  \put(64.844,33.541){\line(1,0){5.303}}
  \put(.726,39.848){\line(1,0){5.303}}
  \put(.726,23.45){\line(1,0){5.303}}
  \put(.515,12.939){\line(1,0){5.303}}
  \put(.515,18.195){\line(1,0){5.303}}
  \put(.326,17.541){\line(0,-1){4.596}}
  \put(5.581,28.052){\line(0,-1){4.596}}
  \put(.115,28.473){\line(0,-1){4.596}}
  \put(5.792,17.331){\line(0,-1){4.596}}
  \put(11.468,17.331){\line(0,-1){4.596}}
  \put(64.865,16.7){\line(0,-1){4.596}}
  \put(16.934,17.331){\line(0,-1){4.596}}
  \put(70.331,16.7){\line(0,-1){4.596}}
  \put(22.61,17.331){\line(0,-1){4.596}}
  \put(76.007,16.7){\line(0,-1){4.596}}
  \put(27.655,17.331){\line(0,-1){4.596}}
  \put(81.052,16.7){\line(0,-1){4.596}}
  \put(23.033,39.848){\line(1,0){4.596}}
  \put(76.43,39.217){\line(1,0){4.596}}
  \put(22.823,12.939){\line(1,0){4.596}}
  \put(76.22,12.308){\line(1,0){4.596}}
  \put(22.823,18.195){\line(1,0){4.596}}
  \put(76.22,17.564){\line(1,0){4.596}}
  \put(23.033,23.45){\line(1,0){4.596}}
  \put(76.43,22.819){\line(1,0){4.596}}
  \put(22.612,28.916){\line(1,0){4.596}}
  \put(76.009,28.285){\line(1,0){4.596}}
  \put(22.612,34.172){\line(1,0){4.596}}
  \put(76.009,33.541){\line(1,0){4.596}}
  \put(.255,22.951){\line(0,-1){.8838}}
  \put(.255,21.183){\line(0,-1){.8838}}
  \put(.255,19.415){\line(0,-1){.8838}}
  \put(53.653,22.32){\line(0,-1){.8838}}
  \put(53.653,20.552){\line(0,-1){.8838}}
  \put(53.653,18.785){\line(0,-1){.8838}}
  \put(5.721,22.741){\line(0,-1){.8838}}
  \put(5.721,20.973){\line(0,-1){.8838}}
  \put(5.721,19.205){\line(0,-1){.8838}}
  \put(59.119,22.11){\line(0,-1){.8838}}
  \put(59.119,20.342){\line(0,-1){.8838}}
  \put(59.119,18.575){\line(0,-1){.8838}}
  \put(11.397,22.741){\line(0,-1){.8838}}
  \put(11.397,20.973){\line(0,-1){.8838}}
  \put(11.397,19.205){\line(0,-1){.8838}}
  \put(64.795,22.11){\line(0,-1){.8838}}
  \put(64.795,20.342){\line(0,-1){.8838}}
  \put(64.795,18.575){\line(0,-1){.8838}}
  \put(16.863,22.741){\line(0,-1){.8838}}
  \put(16.863,20.973){\line(0,-1){.8838}}
  \put(16.863,19.205){\line(0,-1){.8838}}
  \put(70.261,22.11){\line(0,-1){.8838}}
  \put(70.261,20.342){\line(0,-1){.8838}}
  \put(70.261,18.575){\line(0,-1){.8838}}
  \put(22.539,22.741){\line(0,-1){.8838}}
  \put(22.539,20.973){\line(0,-1){.8838}}
  \put(22.539,19.205){\line(0,-1){.8838}}
  \put(75.937,22.11){\line(0,-1){.8838}}
  \put(75.937,20.342){\line(0,-1){.8838}}
  \put(75.937,18.575){\line(0,-1){.8838}}
  \put(27.584,22.741){\line(0,-1){.8838}}
  \put(27.584,20.973){\line(0,-1){.8838}}
  \put(27.584,19.205){\line(0,-1){.8838}}
  \put(80.982,22.11){\line(0,-1){.8838}}
  \put(80.982,20.342){\line(0,-1){.8838}}
  \put(80.982,18.575){\line(0,-1){.8838}}
  \put(17.482,39.778){\line(1,0){.8838}}
  \put(19.25,39.778){\line(1,0){.8838}}
  \put(21.018,39.778){\line(1,0){.8838}}
  \put(70.88,39.147){\line(1,0){.8838}}
  \put(72.647,39.147){\line(1,0){.8838}}
  \put(74.415,39.147){\line(1,0){.8838}}
  \put(17.272,12.869){\line(1,0){.8838}}
  \put(19.04,12.869){\line(1,0){.8838}}
  \put(20.808,12.869){\line(1,0){.8838}}
  \put(70.67,12.238){\line(1,0){.8838}}
  \put(72.437,12.238){\line(1,0){.8838}}
  \put(74.205,12.238){\line(1,0){.8838}}
  \put(17.272,18.125){\line(1,0){.8838}}
  \put(19.04,18.125){\line(1,0){.8838}}
  \put(20.808,18.125){\line(1,0){.8838}}
  \put(70.67,17.494){\line(1,0){.8838}}
  \put(72.437,17.494){\line(1,0){.8838}}
  \put(74.205,17.494){\line(1,0){.8838}}
  \put(17.482,23.38){\line(1,0){.8838}}
  \put(19.25,23.38){\line(1,0){.8838}}
  \put(21.018,23.38){\line(1,0){.8838}}
  \put(70.88,22.749){\line(1,0){.8838}}
  \put(72.647,22.749){\line(1,0){.8838}}
  \put(74.415,22.749){\line(1,0){.8838}}
  \put(17.061,28.846){\line(1,0){.8838}}
  \put(18.829,28.846){\line(1,0){.8838}}
  \put(20.597,28.846){\line(1,0){.8838}}
  \put(70.459,28.215){\line(1,0){.8838}}
  \put(72.226,28.215){\line(1,0){.8838}}
  \put(73.994,28.215){\line(1,0){.8838}}
  \put(17.061,34.102){\line(1,0){.8838}}
  \put(18.829,34.102){\line(1,0){.8838}}
  \put(20.597,34.102){\line(1,0){.8838}}
  \put(70.459,33.471){\line(1,0){.8838}}
  \put(72.226,33.471){\line(1,0){.8838}}
  \put(73.994,33.471){\line(1,0){.8838}}
  \put(11.379,28.998){\line(0,-1){5.48}}
  \put(64.776,28.367){\line(0,-1){5.48}}
  \put(11.379,34.253){\line(0,-1){5.48}}
  \put(64.776,33.622){\line(0,-1){5.48}}
  \put(16.845,28.998){\line(0,-1){5.48}}
  \put(70.242,28.367){\line(0,-1){5.48}}
  \put(16.845,34.253){\line(0,-1){5.48}}
  \put(70.242,33.622){\line(0,-1){5.48}}
  \put(22.521,28.998){\line(0,-1){5.48}}
  \put(75.918,28.367){\line(0,-1){5.48}}
  \put(22.521,34.253){\line(0,-1){5.48}}
  \put(75.918,33.622){\line(0,-1){5.48}}
  \put(27.566,28.998){\line(0,-1){5.48}}
  \put(80.963,28.367){\line(0,-1){5.48}}
  \put(27.566,34.253){\line(0,-1){5.48}}
  \put(80.963,33.622){\line(0,-1){5.48}}
  \put(-2.383,23.475){\line(0,1){0}}
  \put(51.015,22.844){\line(0,1){0}}
  \put(16.957,37.35){\line(0,1){0}}
  \put(70.355,36.719){\line(0,1){0}}
  \thicklines
  \put(5.887,34.477){\line(0,1){0}}
  \put(59.284,33.846){\line(0,1){0}}
  \multiput(5.887,34.477)(-.030143,-.090143){7}{\line(0,-1){.090143}}
  \multiput(59.284,33.846)(-.030143,-.090143){7}{\line(0,-1){.090143}}
  \put(.421,29.221){\line(0,1){0}}
  \put(53.818,28.59){\line(0,1){0}}
  \multiput(.421,29.221)(-.030143,-.06){7}{\line(0,-1){.06}}
  \multiput(53.818,28.59)(-.030143,-.06){7}{\line(0,-1){.06}}
  \put(5.887,23.335){\line(0,1){0}}
  \put(59.284,22.704){\line(0,1){0}}
  \multiput(5.887,23.335)(.0323077,.0323846){13}{\line(0,1){.0323846}}
  \multiput(59.284,22.704)(.0323077,.0323846){13}{\line(0,1){.0323846}}
  \thinlines
  \qbezier(.21,40.153)(14.085,44.147)(27.96,39.733)
  \qbezier(53.607,39.522)(67.482,43.516)(81.357,39.102)
  \qbezier(27.751,12.614)(13.876,8.62)(.001,13.034)
  \qbezier(81.148,11.983)(67.273,7.989)(53.398,12.403)
  \qbezier(.21,29.221)(14.085,33.216)(27.96,28.801)
  \qbezier(53.607,28.59)(67.482,32.585)(81.357,28.17)
  \qbezier(.42,34.477)(14.295,38.472)(28.17,34.057)
  \qbezier(53.817,33.846)(67.692,37.841)(81.567,33.426)
  \qbezier(.421,23.756)(14.296,27.75)(28.17,23.335)
  \qbezier(53.818,23.125)(67.693,27.119)(81.567,22.704)
  \qbezier(.21,18.29)(14.085,22.284)(27.96,17.869)
  \qbezier(53.607,17.659)(67.482,21.653)(81.357,17.238)
  \qbezier(11.352,39.733)(8.304,26.068)(11.563,13.244)
  \qbezier(64.749,39.102)(61.701,25.437)(64.96,12.613)
  \qbezier(17.028,39.733)(13.98,26.068)(17.239,13.244)
  \qbezier(70.425,39.102)(67.377,25.437)(70.636,12.613)
  \qbezier(22.494,39.733)(19.446,26.068)(22.705,13.244)
  \qbezier(75.891,39.102)(72.843,25.437)(76.102,12.613)
  \put(.421,23.545){\line(0,1){0}}
  \put(53.818,22.914){\line(0,1){0}}
  \thicklines
  \put(5.676,23.125){\line(0,1){0}}
  \put(59.073,22.494){\line(0,1){0}}
  \thinlines
  \multiput(.421,40.153)(-.030143,-.810857){7}{\line(0,-1){.810857}}
  \thicklines
  \put(5.676,23.335){\line(0,1){0}}
  \put(59.073,22.704){\line(0,1){0}}
  \put(-0,23.756){\line(0,1){0}}
  \put(53.397,23.125){\line(0,1){0}}
  \put(6.307,25.017){\line(0,1){0}}
  \put(59.704,24.386){\line(0,1){0}}
  \put(5.887,28.801){\line(0,1){0}}
  \put(59.284,28.17){\line(0,1){0}}
  \put(5.887,28.801){\line(1,0){.631}}
  \put(59.284,28.17){\line(1,0){.631}}
  \put(.421,34.057){\line(1,0){5.676}}
  \put(53.818,17.449){\line(1,0){5.676}}
  \put(53.818,39.102){\line(1,0){5.676}}
  \put(.42,29.012){\line(0,1){5.676}}
  \put(53.817,12.404){\line(0,1){5.676}}
  \put(54.027,27.961){\line(0,1){5.676}}
  \put(54.027,22.915){\line(0,1){5.676}}
  \put(53.817,34.057){\line(0,1){5.676}}
  \put(5.886,29.011){\line(0,1){5.676}}
  \put(59.283,12.403){\line(0,1){5.676}}
  \put(59.493,27.96){\line(0,1){5.676}}
  \put(59.493,22.914){\line(0,1){5.676}}
  \put(59.283,34.056){\line(0,1){5.676}}
  \put(.631,28.801){\line(1,0){5.676}}
  \put(54.028,12.193){\line(1,0){5.676}}
  \put(53.818,28.17){\line(1,0){5.676}}
  \put(53.818,23.125){\line(1,0){5.676}}
  \put(54.028,33.846){\line(1,0){5.676}}
  \put(.631,34.057){\line(1,0){5.466}}
  \put(54.028,17.449){\line(1,0){5.466}}
  \put(54.028,39.102){\line(1,0){5.466}}
  \put(.21,29.432){\line(0,1){5.466}}
  \put(53.607,12.824){\line(0,1){5.466}}
  \put(53.817,28.381){\line(0,1){5.466}}
  \put(53.817,23.335){\line(0,1){5.466}}
  \put(53.607,34.477){\line(0,1){5.466}}
  \put(5.886,29.221){\line(0,1){5.466}}
  \put(59.283,12.613){\line(0,1){5.466}}
  \put(59.493,28.17){\line(0,1){5.466}}
  \put(59.493,23.124){\line(0,1){5.466}}
  \put(59.283,34.266){\line(0,1){5.466}}
  \put(.841,28.801){\line(1,0){5.466}}
  \put(54.238,12.193){\line(1,0){5.466}}
  \put(54.028,28.17){\line(1,0){5.466}}
  \put(54.028,23.125){\line(1,0){5.466}}
  \put(54.238,33.846){\line(1,0){5.466}}
  \put(.211,34.267){\line(1,0){5.886}}
  \put(53.608,17.659){\line(1,0){5.886}}
  \put(53.608,39.312){\line(1,0){5.886}}
  \put(.211,34.267){\line(1,0){5.676}}
  \put(53.608,17.659){\line(1,0){5.676}}
  \put(53.608,39.312){\line(1,0){5.676}}
  \put(.21,28.801){\line(0,1){5.676}}
  \put(53.607,12.193){\line(0,1){5.676}}
  \put(53.817,27.75){\line(0,1){5.676}}
  \put(53.817,22.704){\line(0,1){5.676}}
  \put(53.607,33.846){\line(0,1){5.676}}
  \put(5.676,28.801){\line(0,1){5.676}}
  \put(59.073,12.193){\line(0,1){5.676}}
  \put(59.283,27.75){\line(0,1){5.676}}
  \put(59.283,22.704){\line(0,1){5.676}}
  \put(59.073,33.846){\line(0,1){5.676}}
  \put(.421,29.011){\line(1,0){5.676}}
  \put(53.818,12.403){\line(1,0){5.676}}
  \put(53.608,28.38){\line(1,0){5.676}}
  \put(53.608,23.335){\line(1,0){5.676}}
  \put(53.818,34.056){\line(1,0){5.676}}
  \put(-0,34.057){\line(1,0){5.886}}
  \put(53.397,17.449){\line(1,0){5.886}}
  \put(53.397,39.102){\line(1,0){5.886}}
  \put(.421,28.591){\line(0,1){5.886}}
  \put(53.818,11.983){\line(0,1){5.886}}
  \put(54.028,27.54){\line(0,1){5.886}}
  \put(54.028,22.494){\line(0,1){5.886}}
  \put(53.818,33.636){\line(0,1){5.886}}
  \put(5.887,28.591){\line(0,1){5.886}}
  \put(59.284,11.983){\line(0,1){5.886}}
  \put(59.494,27.54){\line(0,1){5.886}}
  \put(59.494,22.494){\line(0,1){5.886}}
  \put(59.284,33.636){\line(0,1){5.886}}
  \put(.211,28.801){\line(1,0){5.886}}
  \put(53.608,12.193){\line(1,0){5.886}}
  \put(53.398,28.17){\line(1,0){5.886}}
  \put(53.398,23.125){\line(1,0){5.886}}
  \put(53.608,33.846){\line(1,0){5.886}}
  \thinlines
  \qbezier(27.54,40.153)(32.165,26.909)(27.54,12.824)
  \qbezier(80.937,39.522)(85.562,26.278)(80.937,12.193)
  \qbezier(6.097,39.943)(10.722,26.699)(6.097,12.614)
  \qbezier(59.704,39.733)(64.329,26.489)(59.704,12.404)
  \qbezier(59.704,39.523)(64.329,26.279)(59.704,12.194)
  \qbezier(59.494,39.312)(64.119,26.068)(59.494,11.983)
  \qbezier(-0,12.824)(-4.625,26.068)(-0,40.153)
  \qbezier(53.607,12.404)(48.982,25.648)(53.607,39.733)
  \qbezier(53.607,12.614)(48.982,25.858)(53.607,39.943)
  \qbezier(53.397,12.193)(48.772,25.437)(53.397,39.522)
  \put(12.824,3.784){\makebox(0,0)[cc]{(a).   $W_2^2$ }}
  \put(66.852,4.835){\makebox(0,0)[cc]{(b).   $W_3^2$}}
  \put(40.363,-1.892){\makebox(0,0)[cc]{Figure 2. The subgraph $W_2^2 $  and $W_3^2$ of $C_5\Box C_5$}}
  \end{picture}

  \end{center} 
   \vskip.3cm
   
   It is known that  a smallest $h$-regular subgraph of the hypercube $Q_n$ is isomorphic to $Q_h$ (see \cite{bs}). 
 We prove the analogous result for the Cartesian product of cycles. In fact, we establish that  $W_h^r$
is a smallest $h$-regular subgraph of the above graph $G.$   

The following lemma follows from Lemma \ref{regular}, definition of the number $a_h^r$ and the fact that the hypercube $Q_n$ is an $n$-regular, $n$-connected graph on $2^n$ vertices for any integer $ n \geq 0.$    
   \begin{lemma} \label{whr-reg}
   The graph $W_h^r$ is $h$-regular and $h$-connected with $a_h^r$ vertices.
   \end{lemma}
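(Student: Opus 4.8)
The plan is to prove the lemma by a direct case analysis matching the two branches of Definition~\ref{whr}, treating the regularity, the connectivity, and the vertex count simultaneously. Since $W_h^r$ is built from hypercube and cycle factors by Cartesian products, the entire statement reduces to repeated bookkeeping on top of Lemma~\ref{regular}, together with the two elementary facts quoted just before the lemma: $Q_n$ is $n$-regular, $n$-connected on $2^n$ vertices, and each cycle $C_{k_j}$ with $k_j \ge 4$ is $2$-regular, $2$-connected on $k_j$ vertices.

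First, in the range $0 \le h \le r$ we have $W_h^r = Q_h$ by Definition~\ref{whr}, so $W_h^r$ is $h$-regular and $h$-connected on $2^h$ vertices; since $a_h^r = 2^h$ in this range by Definition~\ref{ahr}, the claim follows immediately. Second, when $h = r+i$ with $1 \le i \le r$, we have $W_h^r = Q_{r-i} \Box C_{k_1} \Box \cdots \Box C_{k_i}$. Here $Q_{r-i}$ is $(r-i)$-regular and $(r-i)$-connected on $2^{r-i}$ vertices, and each $C_{k_j}$ contributes degree $2$, connectivity $2$, and a factor $k_j$ to the vertex count. Applying Lemma~\ref{regular} to combine these $i+1$ factors, the regularity and the connectivity both add up to $(r-i) + 2i = r+i = h$, while the number of vertices multiplies to $2^{r-i} k_1 k_2 \cdots k_i$, which equals $a_h^r$ by Definition~\ref{ahr}. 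This establishes the lemma in the second case as well.

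The only genuine point needing care is that Lemma~\ref{regular} is stated for a product of \emph{two} graphs, whereas the second case involves $i+1$ factors; I would therefore make the iteration precise by a short induction on the number of factors, at each step applying Lemma~\ref{regular} to the partial product (which is itself regular and connected of the appropriate degree by the induction hypothesis) and the next cycle. I also expect to note the degenerate subcase $i = r$, where $Q_{r-i} = Q_0$ is a single vertex, that is, $0$-regular, $0$-connected on $2^0 = 1$ vertex; this keeps the base of the induction and the arithmetic valid. Beyond this, the argument is entirely routine arithmetic, so no substantial obstacle arises.
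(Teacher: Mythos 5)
Your proposal is correct and takes essentially the same route as the paper, which simply observes that the lemma follows from Lemma \ref{regular}, Definition \ref{ahr}, and the fact that $Q_n$ is $n$-regular, $n$-connected on $2^n$ vertices. Your write-up merely makes explicit the routine details the paper leaves implicit (the induction on the number of factors and the degenerate case $Q_0$), which is a harmless elaboration rather than a different argument.
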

     	 
   We need the following lemma that gives relations between different values of $a_h^r.$
  \begin{lemma}\label{pro ahr} Let $ r \geq 2$ and let $ a_h^r$ be the quantity given in Definition \ref{ahr}. Then the following statements hold.
  	\begin{enumerate}  
  \item $a_h^r=2a_{h-1}^{r-1}$ if $1\leq h \leq 2r-1.$
 \item  $k_ra_{h-2}^{r-1}\geq a_h^r$ if $2\leq h \leq 2r.$
  \item  $a_h^{r-1}\geq a_h^r$ if $0\leq h \leq 2r-2.$ 
    \end{enumerate}
  \end{lemma}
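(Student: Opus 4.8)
The plan is to prove all three statements by direct substitution of Definition \ref{ahr}, splitting each into the two regimes $0\le h\le r$ and $h=r+i$ with $1\le i\le r$. The restriction on $h$ in each part is exactly what keeps the shifted indices ($h-1$, $h-2$, or $h$ read in dimension $r-1$) inside the domain of the definition, so in every case I can read off the relevant closed form and compare.

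For part (1) I would treat the claim as an identity. If $1\le h\le r$, then $a_h^r=2^h$, while $h-1\le r-1$ gives $a_{h-1}^{r-1}=2^{h-1}$, so $2a_{h-1}^{r-1}=2^h=a_h^r$. If $h=r+i$ with $1\le i\le r-1$, then $a_h^r=2^{r-i}k_1\cdots k_i$; since $h-1=(r-1)+i$ with $1\le i\le r-1$, I get $a_{h-1}^{r-1}=2^{(r-1)-i}k_1\cdots k_i$, and doubling reproduces $a_h^r$.

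Parts (2) and (3) are inequalities, and I would handle each by the same case split, reducing every case to one of the two facts $k_j\ge 4$ (all cycle lengths are at least four) and $k_r\ge k_i$ ($k_r$ being the largest). For (2): when $h\le r$ both $a_h^r=2^h$ and $a_{h-2}^{r-1}=2^{h-2}$, so $k_ra_{h-2}^{r-1}=k_r2^{h-2}\ge 4\cdot 2^{h-2}=a_h^r$; when $h=r+i$ I write $h-2=(r-1)+(i-1)$ and find $a_{h-2}^{r-1}=2^{r-i}k_1\cdots k_{i-1}$ (the $i=1$ case being $a_{r-1}^{r-1}=2^{r-1}$), whence $k_ra_{h-2}^{r-1}=2^{r-i}k_1\cdots k_{i-1}k_r\ge 2^{r-i}k_1\cdots k_i=a_h^r$ because $k_r\ge k_i$. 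For (3): when $h\le r-1$ both quantities equal $2^h$; when $h=r$ one has $a_r^{r-1}=2^{r-2}k_1\ge 2^r=a_r^r$ since $k_1\ge 4$; and when $h=r+i$ with $1\le i\le r-2$, writing $h=(r-1)+(i+1)$ gives $a_h^{r-1}=2^{r-i-2}k_1\cdots k_{i+1}$, so the claim $a_h^{r-1}\ge a_h^r$ reduces after cancellation to $k_{i+1}\ge 4$.

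The only delicate point, and the place I would be most careful, is the behaviour at the boundary $h=r$ (and its shifted analogues $h-1=r-1$, $h-2=r-1$, $h=r-1$), where the definition switches from the $2^h$ branch to the $2^{r-i}k_1\cdots k_i$ branch. It is worth noting that the two branches agree at this seam---formally taking $i=0$ in the second branch yields the empty product and recovers $2^r$---so the case boundaries cause no discontinuity and each endpoint may be assigned to either branch without affecting the computation. Beyond this bookkeeping no genuine obstacle arises, since every inequality collapses to either $k_j\ge 4$ or $k_r\ge k_i$.
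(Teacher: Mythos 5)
Your proof is correct and follows essentially the same route as the paper's: substitute the two branches of Definition \ref{ahr}, match the shifted indices ($h-1$, $h-2$, or $h$ in dimension $r-1$) to the appropriate branch, and reduce every inequality to $k_j\geq 4$ or $k_r\geq k_i$. The only differences are cosmetic bookkeeping --- you place the boundary case $h=r+1$ of part (2) in the second branch via the empty-product convention, whereas the paper treats it in the first --- and your explicit remark that the two branches agree at the seam, which the paper leaves implicit.
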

  \begin{proof}
  Recall that $ a_h^r = 2^h$ if  $ 0 \leq h \leq r$ and  $ a_h^r = 2^{r-i}k_1k_2\dots k_i$ if  $ h = r + i $ with $ 1 \leq i \leq r,$ where $ 4 \geq k_1 \leq k_2 \leq \dots \leq K_{k_r}.$  
  
  (1). If $1\leq h \leq r,$ then  $a_h^r=2^h = 2 . 2^{h-1} = 2 a_{h-1}^{r-1}.$ For  $r+1\leq h\leq 2r-1,$ we have $h=r+i$ for some $1\leq i \leq r-1.$ Hence  $h-1=(r-1)+i$  gives   $a_{h-1}^{r-1}=2^{(r-1)-i}k_1k_2\dots k_i.$ Therefore $2a^{r-1}_{h-1}= a_h^r.$ 
  
  (2). Suppose $2\leq h \leq r+ 1.$ Then $a_{h-2}^{r-1}=2^{h-2},$ and $ a_h^r = 2^h$ if $ h < r + 1$ and  $ a_h^r = 2^{r-1}k_1$ if  $h=r+1.$  For  $r+2\leq h \leq 2r,$ we have  $h-2=(r-1)+(i-1) $ for some $2\leq i \leq r$ and so,   $a_{h-2}^{r-1}=2^{r-i}k_1k_2\dots k_{i-1}.$ Therefore,  $k_ra_{h-2}^{r-1}\geq a_h^r$ in each case as $k_r\geq k_i \geq k_1 \geq 4.$ 
  
  (3). Note that $ a_h^{r-1} = 2^h$  for  $1\leq h \leq r-1,$    and  $a_h^{r-1}=2^{(r-2)}k_1$  for  $h=r=(r-1)+1,$ and finally, $a_h^{r-1}=2^{r-i-2}k_1k_2\dots k_ik_{i+1}$ 
  for $h=(r-1)+(i+1)$ for $1\leq i \leq r.$ 
  Since $k_{i+1}\geq K_1 \geq  4,$ we have $a_h^{r-1}\geq a_h^r$ in all the three cases.
  \end{proof}
  
  \begin{lemma}\label{ahr 1}
 Every subgraph of the graph $G$ of minimum degree at least $h$ has at least $a_h^r$ vertices.
  \end{lemma}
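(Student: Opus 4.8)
The plan is to argue by induction on $r$, exploiting the decomposition $G = H \Box C_{k_r}$ with $H = C_{k_1}\Box \cdots \Box C_{k_{r-1}}$ described above, together with the three arithmetic relations of Lemma \ref{pro ahr} (which are stated for $r \geq 2$, matching the inductive step). For the base case $r = 1$, the graph $G$ is the single cycle $C_{k_1}$, and I would check the three possibilities directly: a nonempty subgraph of minimum degree $0$ has at least $a_0^1 = 1$ vertex; a subgraph of minimum degree $\geq 1$ contains an edge, hence at least $a_1^1 = 2$ vertices; and a subgraph of minimum degree $\geq 2$ must retain both edges at each of its vertices, so it is all of $C_{k_1}$, giving $a_2^1 = k_1$ vertices.

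For the inductive step, let $S$ be a subgraph of $G$ with $\delta(S) \geq h$, and write $S_i = S \cap H^i$ for the part of $S$ lying in the $i$th copy of $H$. The key local observation is that every vertex of $G$ has exactly two incident matching edges, namely to $H^{i-1}$ and $H^{i+1}$, so a vertex of $S_i$ loses at most two neighbours when we pass from $S$ to $S_i$. Hence a nonempty $S_i$ has minimum degree at least $h-2$ in $H$, and in fact at least $h-1$ (resp. $h$) when exactly one (resp. neither) of the copies $H^{i-1}, H^{i+1}$ meets $S$. The cases $h \in \{0,1\}$ are immediate, so I assume $h \geq 2$ and set $I = \{i : S_i \neq \emptyset\}$.

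I would then split into two cases according to whether $I = \mathbb{Z}_{k_r}$. If every copy is used, each $S_i$ has minimum degree $\geq h-2$ in $H$, so the induction hypothesis gives $|S_i| \geq a_{h-2}^{r-1}$ and therefore $|S| = \sum_i |S_i| \geq k_r\, a_{h-2}^{r-1} \geq a_h^r$ by Lemma \ref{pro ahr}(2). Otherwise $I$ is a proper subset of the cycle, so its maximal runs of consecutive indices are bounded by gaps. If some run is a single index $i$, then neither neighbouring copy meets $S$, whence $\delta(S_i) \geq h$ in $H$ (which forces $h \leq 2r-2$) and $|S| \geq |S_i| \geq a_h^{r-1} \geq a_h^r$ by Lemma \ref{pro ahr}(3). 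If instead every run has length at least two, I pick one run with distinct endpoints $i \neq j$; each endpoint copy borders a gap on its outer side, so $\delta(S_i), \delta(S_j) \geq h-1$ in $H$, and the two copies are disjoint, giving $|S| \geq |S_i| + |S_j| \geq 2\,a_{h-1}^{r-1} = a_h^r$ by Lemma \ref{pro ahr}(1).

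I expect the main obstacle to be bookkeeping the admissible range of $h$: each application of the induction hypothesis to $H$ (a $(2r-2)$-regular graph) is meaningful only when the reduced minimum-degree bound lies in $[0, 2r-2]$, and one must confirm that the extreme values $h = 2r-1$ and $h = 2r$ cannot produce forbidden configurations. For $h = 2r$ the only subgraph with $\delta(S) \geq 2r$ is $S = G$ itself, which lands in the ``all copies used'' case; for $h = 2r-1$ a single-index run would require minimum degree $2r-1$ inside $H$ and is therefore excluded, leaving only runs of length at least two. Checking that each of the three relations of Lemma \ref{pro ahr} is invoked exactly within its stated range is the delicate part, while the geometric decomposition into copies and matchings is routine.
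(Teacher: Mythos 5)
Your proof is correct and takes essentially the same route as the paper's: induction on $r$ via the decomposition $G = H \Box C_{k_r}$, classifying the copies $H^i$ that the subgraph meets, and invoking parts (2), (3), (1) of Lemma \ref{pro ahr} in the all-copies-met, isolated-copy, and two-endpoint-copies cases respectively. Your bookkeeping by maximal runs is in fact slightly more careful than the paper's choice of ``first and last nonempty copy'' (which tacitly assumes, after relabeling, that the wrap-around copy $H^{k_r}$ is empty), but the substance is identical.
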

  \begin{proof}
   We prove the result by induction on $r.$ The result holds obviously for $h=0$ and $h=1$ and so it holds for $r=1.$ Assume that $r\geq 2$ and $h\geq 2.$ We have $ G = C_{k_1}\Box C_{k_2}\dots C_{k_r},$ where $ 4 \leq k_1 \leq k_2 \leq \dots \leq K_{k_r}.$  Write $G$ as $G=H \Box C_{k_r},$  where $H=C_{k_1}\Box C_{k_2}\dots C_{k_{r-1}}.$ Then $G= H^1\cup H^2\cup \dots \cup H^{k_r}\cup (M_1\cup M_2 \cup \dots \cup M_{k_r}),$ where $H^i$ is the copy of $H$ corresponding to vertex $i$ of $C_{k_r}$ and $M_i$ is the perfect matching between the corresponding vertices of $H^i$ and $H^{i+1}.$ 
  
  Let $K$ be a subgraph of $G$ with minimum degree at least $h.$ Then  $K$  intersects at least one $H^i.$ Let $K_i=K\cap H^i$ for $i=1, 2, \dots k_r.$ We may assume that $K_1\neq \emptyset.$ 
  
  (1). Suppose $K=K_1.$ Then $K$ is contained in $H^1$ and the minimum degree of $K$ in $H^1$ is $h.$ Since $H^1$ is $2(r-1)$-regular, $h\leq 2r-2.$ Suppose  $h=2r-2.$ Then $K=H^1$ and so,  $|V(K)|=k_1k_2\dots k_{r-1}.$ If $ r = 2,$  then $|V(K)| = k_1 \geq 4 = a_2^2 = a_h^r.$ If $ r\geq 3,$ then   $ |V(K)|\geq 4k_1k_2\dots k_{r-2} =a_h^r$ as $k_{r-1}\geq 4.$   If $h<2r-2,$  then, by induction and Lemma \ref{pro ahr}(3), we have    $|V(K)|\geq a_h^{r-1}\geq a_h^r.$
   
  (2). Suppose $K_i\neq \emptyset$ for each $i.$ Note that in the graph $G,$ every vertex of $H^i$ has exactly one neighbour in $H^{i-1}$ and one in $H^{i+1}.$ Hence the minimum degree of $K^i$ is at least $h-2.$ By induction, $|V(K_i)|\geq a_{h-2}^{r-1}.$ Therefore, by Lemma \ref{pro ahr}(2),
  $$|V(K)|=|K_1|+|K_2|+\dots +|K_{k_r}|\geq k_ra_{h-2}^{r-1}\geq a_h^r.$$
    
 (3). Suppose $K_i\neq \emptyset$ for some $i.$ Let $t>1$ be the largest integer such that $K_t\neq \emptyset.$ Then the minimum degrees of $K_1$ and $K_t$ are at least $h-1.$ Hence, by induction $|V(K_1)|\geq a_{h-1}^{r-1}$ and $|V(K_t)|\geq a_{h-1}^{r-1}.$ Thus, by Lemma \ref{pro ahr}(1),
  $$|V(K)|\geq |V(K_1)|+|V(K_t)|\geq 2a_{h-1}^{r-1}=a_h^r.$$
  This completes the proof.
   \end{proof}

   As a consequence of Lemma \ref{whr-reg} and Lemma \ref{ahr 1}, we get the following result.
\begin{corollary} $W_h^r$ is a smallest subgraph of  the graph $G$ of minimum degree at least $h.$
\end{corollary}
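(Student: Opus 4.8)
The plan is to deduce the statement immediately by combining Lemma~\ref{whr-reg} with Lemma~\ref{ahr 1}. First I would record that, by Definition~\ref{whr}, the graph $W_h^r$ is genuinely a subgraph of $G$. Indeed, in either case $W_h^r$ is a box product of factors: each hypercube factor ($Q_h$ when $0\le h\le r$, or $Q_{r-i}$ when $h=r+i$) is a box product of copies of $K_2$, and a single $K_2$ (an edge) is a subgraph of every cycle $C_{k_j}$, while the remaining factors $C_{k_1},\dots,C_{k_i}$ are themselves cycle-factors of $G$. Since the Cartesian product is monotone with respect to taking subgraphs in each coordinate, $W_h^r$ embeds as a subgraph of $G$.

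Next I would invoke Lemma~\ref{whr-reg}, which states that $W_h^r$ is $h$-regular on $a_h^r$ vertices. In particular every vertex of $W_h^r$ has degree exactly $h$, so the minimum degree of $W_h^r$ is at least $h$. Thus $W_h^r$ is one of the subgraphs of $G$ of minimum degree at least $h$, and it has precisely $a_h^r$ vertices.

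Finally I would apply Lemma~\ref{ahr 1}, which guarantees that any subgraph of $G$ of minimum degree at least $h$ has at least $a_h^r$ vertices. Since $W_h^r$ is such a subgraph and meets this lower bound with equality, no subgraph of $G$ of minimum degree at least $h$ can have fewer vertices. Hence $W_h^r$ is a smallest subgraph of $G$ of minimum degree at least $h$, as claimed. There is essentially no obstacle in this argument: the entire content is carried by the two lemmas already established, and the only point worth spelling out is that $W_h^r$ is in fact a subgraph of $G$, which is built directly into Definition~\ref{whr}.
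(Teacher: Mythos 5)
Your proposal is correct and matches the paper's own argument exactly: the paper derives this corollary as an immediate consequence of Lemma~\ref{whr-reg} (that $W_h^r$ is $h$-regular with $a_h^r$ vertices) and Lemma~\ref{ahr 1} (the lower bound of $a_h^r$ vertices for any subgraph of minimum degree at least $h$). Your additional remark verifying that $W_h^r$ genuinely embeds as a subgraph of $G$ is a point the paper leaves implicit, but it is a harmless and sound elaboration rather than a different route.
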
 
 
     \begin{lemma}\label{1 nb} If $ 0\leq h < 2r-1$  and $K$ is a subgraph of $ G$ isomorphic to the graph $W_h^r,$  then every vertex of  $G$ belonging to $V(G) - V(K)$ has at most one neighbour in the subgraph $K.$ 
    \end{lemma}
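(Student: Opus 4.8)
The plan is to work in coordinates. Since $G=C_{k_1}\Box\cdots\Box C_{k_r}$, I identify each vertex with a tuple $x=(x_1,\dots,x_r)$, $x_j\in\mathbb{Z}_{k_j}$, where two vertices are adjacent exactly when they agree in all but one coordinate $j$ and differ there by $\pm1\pmod{k_j}$. First I would record that a copy $K\cong W_h^r$ occupies a coordinate ``box'' $V(K)=D_1\times\cdots\times D_r$, in which each factor $D_j$ is a single vertex, an edge $\{a_j,a_j+1\}$ of $C_{k_j}$, or the full cycle $\mathbb{Z}_{k_j}$: for $h\le r$ there are $h$ edge-factors and $r-h$ singleton-factors, while for $h=r+i$ there are $i$ full-cycle factors and $r-i$ edge-factors. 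By Lemma \ref{whr-reg} the graph $K$ is $h$-regular and, by Lemma \ref{regular}, $G$ is $2r$-regular, so each vertex of $K$ has exactly $2r-h$ neighbours outside $K$; the assertion is that these edges reach pairwise distinct vertices.

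Suppose for contradiction that some $v=(v_1,\dots,v_r)\in V(G)-V(K)$ has two distinct neighbours $u,w\in V(K)$. Say $u$ differs from $v$ only in coordinate $p$ and $w$ differs from $v$ only in coordinate $q$, where $u_p$ and $w_q$ are cycle-neighbours of $v_p$ and $v_q$ respectively. The decisive observation is that for every coordinate $l\neq p$ we have $v_l=u_l\in D_l$ because $u\in V(K)$, and likewise $v_l\in D_l$ for every $l\neq q$ because $w\in V(K)$.

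If $p\neq q$, then every index $l$ differs from at least one of $p,q$, so $v_l\in D_l$ for all $l$; hence $v\in D_1\times\cdots\times D_r=V(K)$, contradicting $v\notin V(K)$. If instead $p=q$, then $v_l\in D_l$ for all $l\neq p$, so the only coordinate in which $v$ can leave the box is $p$; since $v\notin V(K)$ this forces $v_p\notin D_p$, and in particular $D_p$ is not the full cycle. Now $u_p$ and $w_p$ are the two cycle-neighbours $v_p-1$ and $v_p+1$ of $v_p$, they are distinct since $k_p\ge4$, and both lie in $D_p$. A singleton $D_p$ cannot contain two elements, and if $D_p=\{a_p,a_p+1\}$ is an edge then $\{v_p-1,v_p+1\}=\{a_p,a_p+1\}$ would force the pair $v_p\pm1$, which is at cyclic distance $2$, to coincide with a pair at distance $1$, i.e. $2\equiv\pm1\pmod{k_p}$, which is impossible for $k_p\ge4$. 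Either branch yields a contradiction, and no such $v$ can exist.

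The different-coordinate and same-coordinate splits above are routine; the step that genuinely uses $k_j\ge4$ is the edge-factor sub-case, where the gap between the two neighbours of $v_p$ must be separated from an edge of the cycle. The main obstacle I anticipate is the reduction to the box form, since a subgraph merely \emph{isomorphic} to $W_h^r$ need not be a translate of the canonical copy: for instance, when some $k_j=4$ the entire cycle $C_{k_j}\cong Q_2$ gives a non-standard copy of $W_2^r$. I note that the argument above in fact goes through verbatim for \emph{any} coordinate box (arbitrary combinations of singleton, edge, and full factors), so it suffices to show that every copy of $W_h^r$ is such a box; I would establish this either by invoking the vertex-transitivity of $G$ together with a short structural analysis of how an $h$-regular subgraph on $a_h^r$ vertices sits in $G$, or by recasting the whole statement as an induction on $r$ through the layer decomposition $G=H\Box C_{k_r}$ used in Lemma \ref{ahr 1}, tracking how $K$ meets the copies $H^i$.
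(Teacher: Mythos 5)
Your core computation is correct, and it takes a genuinely different route from the paper. The paper argues by induction on $r$: writing $G=H\Box C_{k_r}$ as layers $H^1,\dots,H^{k_r}$ joined by perfect matchings, it uses the factorization $W_h^r\cong W_{h-1}^{r-1}\Box K_2$ to place $K$ inside two adjacent layers, applies the induction hypothesis to $K\cap H^i\cong W_{h-1}^{r-1}$ within each layer, and finishes with the observation that a vertex of one layer has exactly one neighbour in each adjacent layer and none elsewhere. Your proof replaces all of this with a single coordinate computation split into the cases $p\neq q$ and $p=q$; it needs no induction, it isolates exactly where the hypothesis $k_j\geq 4$ enters (the impossibility of $2\equiv\pm 1\pmod{k_p}$), and the box computation does not even use the restriction $h<2r-1$.

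The step you flag as the main obstacle --- that a subgraph merely \emph{isomorphic} to $W_h^r$ must be a coordinate box --- is a real gap in your write-up, but you should know that the paper's proof rests on exactly the same unproven positional claim: its sentence ``since $K$ is isomorphic to $W_{h-1}^{r-1}\Box K_2$, $K$ is contained in two adjacent copies of $H$,'' together with ``$K_i=K\cap H^i$ is isomorphic to $W_{h-1}^{r-1}$,'' is the same assumption stated without justification, and unlike your box hypothesis it can actually fail: for $r=2$, $k_1=k_2=4$, $h=2$, the subgraph $\{v\}\Box C_{k_2}$ is isomorphic to $W_2^2=Q_2=C_4$ but meets all four copies $H^1,\dots,H^4$, and a copy of $Q_h$ lying inside a single layer makes $K\cap H^3$ empty rather than a copy of $W_{h-1}^{r-1}$. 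Your box computation handles both of these degenerate shapes; the paper's layer argument does not. Moreover, every place the paper invokes this lemma (Lemmas \ref{2 nb}, \ref{vertex ub} and \ref{edge ub}) applies it to the standard copy $W_h^r$ or to its projection into an adjacent layer, both of which are coordinate boxes, so your argument suffices verbatim for everything the lemma is actually used for. If you do want the statement for arbitrary isomorphic copies, note that your second proposed repair (induction on $r$ through the layer decomposition) is precisely the paper's strategy, and whoever carries it out must also treat the single-layer and wrap-around copies just described, which the paper overlooks.
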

    \begin{proof}
    We proceed by induction on $r.$ If $ r = 1,$ then $G $ is just a cycle and so the result holds obviously. Suppose $r\geq 2.$ Assume that the result holds for the Cartesian product any $r-1$ cycles. We have $G=H\Box C_{k_r}.$ Then  $G= H^1\cup H^2\cup \dots \cup H^{k_r}\cup )M_1\cup M_2 \cup \dots \cup M_{k_r}),$ where $H^i$ is the copy of $H$ corresponding to vertex $i$ of $C_{k_r}$ and $M_i$ is the perfect matching between the corresponding vertices of $H^i$ and $H^{i+1}.$  Since the graph  $K$ is isomorphic to $W_h^r=W_{h-1}^{r-1}\Box K_2, $ $K$ is contained in two adjacent copies of $H^i.$  We may assume that $K$ is a subgraph of $H^2\cup H^3\cup M_2.$ Let $ K _i = K \cap H^i$ for $ i = 2, 3.$ Then $K_i$ is isomprphic to $W_{h-1}^{r-1}.$ Let $x$ be any vertex of $V(G)-V(K).$ If $x$ is in $V(H^2),$  then, by induction, $x$ has at most one neighbour in $K_2.$ Then $x$ has no neighbour in $K_3$ and so, it has at most one neighbour in $K.$ Similarly, $x$ has at most one neighbour in $K$ if it belongs to $V(H^3).$   Suppose $x$ is in $H^j$ for some $j \notin\{2, 3\}.$  Then $x$ has exactly one neighbour in $H^{j+1}$ and  one in $H^{j-1}$ each and no neighbour in $H^i$ for any $i\notin\{j-1, j+1\}.$ This shows that  $x$ has at most one neighbour in $H^2\cup H^3$ and hence in $K$  as  $k_r\geq 4.$ This completes the proof. 
    \end{proof}

 The above result is analogous to the result for hypercubes which states that if $K$ is a subgraph the hypercube $Q_n$ isomorphic to $Q_h$ with $ h < n,$ then every vertex of $Q_n$ which is not in $K$ has at most one neighbour in the graph $K;$ see \cite{bs}.

    We introduce few more notations which are required to prove some properties of the graph $W_h^r.$

   Let $S$ be a set  of vertices of a graph $K.$ A vertex of $K,$ which is not in $S,$ is a \textit{neighbour} of $S$ if it is adjacent to a vertex in $S.$ Denote by $N(S)$ the set of neighbours of $S.$ Let  $N[S] = N(S)\cup S.$ Sometimes we write $N_K(S)$ for $N(S)$ and $N_K[S]$ for $N[S]$ to specify the graph $K.$ The subgraph of $K$ induced by $S$ is denoted by $[S].$  
   
   \begin{lemma}\label{2 nb}
     If  $0\leq h \leq 2r-1$ and  $S=V(W_h^r),$  then any vertex of $G$ which is not in  $N[S]$ has at most two neighbours in the set $N[S].$  
     \end{lemma}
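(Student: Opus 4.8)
The plan is to argue by induction on $r$, following the same decomposition used in the proof of Lemma~\ref{1 nb}. Before starting the induction I would dispose of $h=0$ for every $r$ directly: there $S$ consists of a single vertex $s$, a vertex $x\notin N[S]$ is at distance at least two from $s$, and its neighbours in $N[S]$ are exactly the common neighbours of $x$ and $s$; since $G$ is a Cartesian product of cycles of length at least $4$, any two vertices at distance two have at most two common neighbours, so the bound holds. Hence I may assume $h\ge 1$. The base case $r=1$ then only leaves $h=1$ (as $h\le 2r-1=1$), where $G=C_{k_1}$ and $S$ is an edge; a direct inspection of the cycle gives the claim.

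For the inductive step with $r\ge 2$ and $h\ge 1$, write $G=H\Box C_{k_r}$ with $H=C_{k_1}\Box\cdots\Box C_{k_{r-1}}$ and $G=H^1\cup\cdots\cup H^{k_r}\cup(M_1\cup\cdots\cup M_{k_r})$ as in the notation preceding the lemma. Since $W_h^r\cong W_{h-1}^{r-1}\Box K_2$, the induced subgraph $[S]$ lies in two consecutive copies, which I take to be $H^2$ and $H^3$; the matching $M_2$ then shows that $[S]\cap H^2$ and $[S]\cap H^3$ are copies of $W_{h-1}^{r-1}$ sitting over a common vertex set $T\subseteq V(H)$ with $[T]\cong W_{h-1}^{r-1}$. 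The first real step is to compute $N[S]$ copy by copy: writing vertices of $G$ as $(v,j)$ with $v\in V(H)$, one checks that $N[S]$ meets only the copies $H^1,H^2,H^3,H^4$, contributing $T\times\{1\}$, $N_H[T]\times\{2\}$, $N_H[T]\times\{3\}$ and $T\times\{4\}$ respectively, where $N_H[\cdot]$ denotes the closed neighbourhood inside $H$.

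The heart of the argument is a case analysis on the copy index $j$ of a vertex $x=(v,j)\notin N[S]$, counting its neighbours in $N[S]$. If $j$ lies outside $\{1,2,3,4\}$ and is not cyclically adjacent to this block, then $x$ has no neighbour in $N[S]$. For $j\in\{2,3\}$ the hypothesis $x\notin N[S]$ forces $v\notin N_H[T]$, and the only neighbours of $x$ in $N[S]$ are the within-copy neighbours $(w,j)$ with $w\in N_H[T]$; their number equals $|N_H(v)\cap N_H[T]|$, which is at most two by the inductive hypothesis, namely Lemma~\ref{2 nb} applied to $H$ with the set $T$. For $j\in\{1,4\}$ we have $v\notin T$, and $x$ has at most the within-copy neighbours lying in $T$, which number at most one by Lemma~\ref{1 nb} applied to $H$, together with at most one matching-neighbour into the adjacent copy carrying $N_H[T]$; the condition $k_r\ge 4$ keeps the cyclic wrap-around from producing a second matching-neighbour, so the total is at most two. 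The faraway copies $j\in\{5,\dots,k_r\}$ contribute only through matching edges to $H^1$ and $H^4$, and a short check (with the wrap-around case $k_r=5$ being the extremal one) again bounds the count by two. The conceptually essential point is precisely the reduction of the $j\in\{2,3\}$ case to Lemma~\ref{2 nb} one dimension down.

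The main obstacle is the two boundary values of $h$, where the cited lemmas sit at or beyond their stated ranges. When $h=2r-1$ we have $T=V(H)$, so $N[S]$ fills $H^1,\dots,H^4$ entirely; the cases $j\in\{1,2,3,4\}$ are then vacuous (there are no vertices of those copies outside $N[S]$), and only the faraway copies survive, which are handled directly. When $h=2r-2$ the inductive set $T$ realises the \emph{top} regular subgraph $W_{2(r-1)-1}^{r-1}$ of $H$, for which Lemma~\ref{1 nb} is not directly available; here I would verify the ``at most one neighbour in $T$'' property by hand, using that $T$ consists of two adjacent full sub-copies and that $k_{r-1}\ge 4$ forbids a vertex from having neighbours of $T$ on both sides. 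Apart from these two boundary verifications, the induction closes, with Lemma~\ref{2 nb} in dimension $r-1$ supplying the crucial bound.
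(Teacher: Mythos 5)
Your proposal is correct and takes essentially the same route as the paper's own proof: induction on $r$ using the decomposition $G=H\Box C_{k_r}$, placing $W_h^r\cong W_{h-1}^{r-1}\Box K_2$ inside two consecutive copies $H^2\cup H^3\cup M_2$, and then the identical three-way case analysis on the copy index — inductive hypothesis for $j\in\{2,3\}$, Lemma~\ref{1 nb} plus matching-edge counting for $j\in\{1,4\}$, and a direct count for the faraway copies. If anything, you are more careful than the paper at the boundaries: the paper does not separate $h=0$ (where the isomorphism $W_h^r\cong W_{h-1}^{r-1}\Box K_2$ fails) and it invokes Lemma~\ref{1 nb} at $h=2r-2$, where that lemma's hypothesis $h-1<2(r-1)-1$ is technically violated; your direct verifications of these two cases close gaps the paper glosses over without changing the argument.
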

     
    \begin{proof} We proceed by induction on $r.$  The result holds trivially for $r=1.$ Suppose $ r\geq 2.$  Assume that the result holds for the Cartesian product of any $r-1$ cycles. Write $G$ as $H\Box C_{k_r}, $ where $H =C_{k_1}\Box C_{k_2}\Box \cdots \Box C_{k_{r-1}}.$   Since the graph $W_h^r$ is isomorphic to $W_{h-1}^{r-1}\Box K_2,$ we may assume that $W_h^r$ is a subgraph of $H \Box K_2$  by considering $W_{h-1}^{r-1}$ as a subgraph of $H.$  Hence, we may assume that $W_h^r$ is a subgraph of  $H^2 \cup H^3 \cup M_2,$ where $M_2$ is the perfect matching between $H^2$ and $H^3.$  Let $S_i = V(W_h^r \cap H^i)$ for $i=2,3.$ Then $ S = V(W_h^r) = S_2 \cup S_3.$

    	Let $ x \in V(G)-N[S].$  Then  $ x $ is a vertex of $H^j$
 for some $j.$ If $ j > 4,$ then, by the definition of the Cartesian product of graphs, $x$ has at most two neighbours in the set $V(H^1) \cup V(H^2)\cup V(H^3)\cup V(H^4)$  and so in  its subset $N[S].$ Suppose $ j \in \{1, 2,3,4\} .$ Then  $h < 2r-1$ as for $ h = 2r - 1,$  $W_h^r  = H^2 \cup H^3 \cup M_2$  and so, $N[S]= V(H^1)\cup V(H^2)\cup V(H^3)\cup V(H^4).$   Let $S_2'$ be the set of neighbours  of $S_2$ that are present in $H^1$ and let $S_3'$ be the set of neighbours of  $S_3$  in $H^4.$  Then  $N[S]= N_{H^2}[S_2]\cup N_{H^3}[S_3]\cup S_2' \cup S_3'.$ Further, the subgraph $[S_i']$ of $G$ induced by the set $S_i'$ is isomorphic to the graph $W_{h-1}^{r-1}$ for $ i = 2, 3.$    If $ j \in \{1, 4\},$  then, by Lemma \ref{1 nb},  $x$ has at most one neighbour in $S_2'\cup S_3'$ and at most one in $V(H^2) \cup V(H^3).$ Also, if $ j \in \{2, 3\},$   then, by induction, $x$ has at most two neighbours in $N_{H^i}[S_i]$ and no neighbour in $S_2' \cup S_3'.$ Thus, in any case, $x$ has at most two neighbour in $N[S].$  
   \end{proof}

We need  the following lemma to obtain an upper bound on $k^h(G)$.
\begin{lemma}\label{spanning}
     For $0\leq h \leq 2r-1,$ the inequality $(2r - h + 1) a_h^r \leq k_1k_2\dots k_r$  holds. Moreover, the inequality is strict if  $ h < 2r -1.$  
     \end{lemma}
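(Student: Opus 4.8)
The plan is to reduce everything to the elementary inequality $n+1\le 2^{n}$ for integers $n\ge 0$, which holds for all such $n$ and is an equality exactly when $n\in\{0,1\}$ (an easy induction). Since the graph $G$ has $k_1k_2\cdots k_r$ vertices and each factor satisfies $k_j\ge 4=2^{2}$, I will repeatedly use the crude bound $k_{j}\ge 4$ to replace cycle lengths by powers of $2$. The argument then splits according to the two clauses of Definition \ref{ahr}, and in each clause the claim collapses to the helper inequality with an explicit value of $n$.

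First I would treat the range $0\le h\le r$, where $a_h^r=2^{h}$. Here $k_1k_2\cdots k_r\ge 4^{r}=2^{2r}$, so it suffices to show $(2r-h+1)2^{h}\le 2^{2r}$, i.e.\ $(2r-h+1)\le 2^{\,2r-h}$. Setting $n=2r-h$ (note $n\ge r\ge 1$ since $h\le r$), this is precisely the helper inequality, giving the chain
\[
(2r-h+1)a_h^r=(2r-h+1)2^{h}\le 2^{2r}=4^{r}\le k_1k_2\cdots k_r .
\]
Next I would treat the range $h=r+i$ with $1\le i\le r-1$ (the constraint $h\le 2r-1$ forces $i\le r-1$), where $a_h^r=2^{\,r-i}k_1k_2\cdots k_i$ and $2r-h+1=r-i+1$. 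Dividing the desired inequality by $k_1\cdots k_i$ reduces it to
\[
(r-i+1)\,2^{\,r-i}\le k_{i+1}k_{i+2}\cdots k_r .
\]
The right-hand side is a product of $r-i$ factors, each at least $4$, hence at least $2^{2(r-i)}$; so it suffices to prove $(r-i+1)\le 2^{\,r-i}$, which is the helper inequality with $n=r-i\ge 1$.

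For the strictness claim I would track when the helper inequality is sharp. In the first range, $h<2r-1$ means $n=2r-h\ge 2$, where $n+1<2^{n}$ is strict; in the second range, $h=r+i<2r-1$ means $i\le r-2$, i.e.\ $n=r-i\ge 2$, again strict. In both situations the strict gain in $(2r-h+1)\le 2^{\,2r-h}$ (respectively $(r-i+1)\le 2^{\,r-i}$) propagates through the chain to give the strict inequality $(2r-h+1)a_h^r<k_1k_2\cdots k_r$; the only equality cases sit at $n=1$, corresponding exactly to the excluded boundary $h=2r-1$, consistent with the statement. There is no genuine obstacle here beyond the index bookkeeping in the second range and the care needed to confirm that all equality cases are concentrated at $h=2r-1$, which is why I would state and settle the helper inequality with its equality condition up front.
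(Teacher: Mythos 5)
Your proof is correct and follows essentially the same route as the paper: split according to the two clauses of Definition \ref{ahr}, bound the linear factor $2r-h+1$ by a power of $2$ (the paper invokes $2l \leq 2^{l}$ where you use $n+1 \leq 2^{n}$), and absorb that power of $2$ into the product of cycle lengths via $k_j \geq 4 = 2^{2}$. Your version is somewhat more streamlined, since a single helper inequality with its equality cases handles all subcases (including $h = 2r-1$ and the strictness claim) uniformly, whereas the paper treats $h \in \{0,1\}$, $2 \leq h \leq r$, $h = r+i$ with $i \leq r-2$, and $h = 2r-1$ separately.
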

     \begin{proof}
     Recall that $4\leq k_1\leq k_2\dots \leq k_r,$ and  $ a_h^r = 2^h$ if $ h \leq r$ and   $ a_h^r = 2^{(r-i)}k_1k_2\dots k_{i}$ if $ h = r+i.$  For convenience, let $L = (2r - h + 1) a_h^r$ and $ R = k_1k_2\dots k_r.$ Obviously,   $L = a_h^r = R $ if $ h = 2r,$ and $ L = 2a_h^r= 4 k_1k_2\dots k_{r-1} \leq R$ for $ h = 2r-1.$  Suppose $ h \leq 2r-2.$ If $ h = 0 $ or $ h = 1,$  then $L < 4^r \leq R.$ Similarly, if $ 2\leq h \leq r,$ then $L  < 2ra_h^r = 2r 2^h \leq 2r 2^r \leq 4^r \leq R$ as  $2r \leq 2^r.$       Suppose $ h = r + i $ with  $ 1\leq i \leq r-2.$  Then $L=(r-i+1)2^{r-i}k_1k_2\dots k_i < 2^{2(r-i)}k_1k_2\dots k_i$ as $2l \leq 2^l,$ if $l\geq 1$.  This shows that $L\leq 4^{r-i}k_1k_2\dots k_i\leq k_1k_2\dots k_r=R.$
          \end{proof}
   
     \section{Conditional Vertex Connectivity }

     Recall from Section 2 that $G$ denote the graph $C_{k_1}\Box C_{k_2} \Box \cdots \Box C_{k_r}$ and $W_h^r$ denote is a particular $h$-regular subgraph of  $G$ with $ a_h^r$ vertices,  where $4\leq k_1 \leq k_2\leq \dots \leq k_r$   In this section, we obtain the conditional vertex connectivity $k^h(G).$  For a graph $K$  and its subgraph $H,$ let $\delta(K)$ denote the minimum degree of $K$  while  $\delta_K(H)$ denote the minimum degree of $H$ in $K.$   

In the following lemma, we obtain an upper bound on $k^h(G).$
\begin{lemma}\label{vertex ub}
If $0\leq h \leq 2r-2,$ then  $k^h(G)\leq (2r-h)a_h^r.$ 
\end{lemma}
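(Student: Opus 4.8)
The plan is to exhibit an explicit $h$-vertex cut of the required size by deleting the neighbourhood of a copy of $W_h^r$. Recall from Lemma \ref{whr-reg} that $W_h^r$ is $h$-regular and $h$-connected on $a_h^r$ vertices. Take a subgraph $K$ of $G$ isomorphic to $W_h^r$, set $S = V(K)$, and let $F = N(S)$ be the set of neighbours of $S$ in $G$. The candidate cut is $F$, and the goal is to show two things: that $G - F$ is disconnected with $[S]$ as one of its components (so $F$ is genuinely an $h$-vertex cut), and that $|F| = (2r-h)a_h^r$.

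First I would compute $|F|$. Since $G$ is $2r$-regular and $K$ is $h$-regular, every vertex of $S$ has exactly $2r - h$ edges leaving $S$; summing over $S$ gives $(2r-h)a_h^r$ edge-endpoints landing in $N(S)$. To conclude $|N(S)| = (2r-h)a_h^r$ I need that no vertex outside $S$ absorbs more than one of these edges, i.e. each vertex of $V(G)-S$ has at most one neighbour in $S$. This is exactly Lemma \ref{1 nb}, which applies because $h < 2r-1$ (we are given $h \leq 2r-2$). Hence the edges from $S$ to its exterior hit $N(S)$ injectively and $|F| = (2r-h)a_h^r$.

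Next I would verify that $F$ is an $h$-vertex cut. After deleting $F = N(S)$, the induced subgraph $[S]$ is separated from the rest of $G$, since every edge leaving $S$ went to a deleted vertex; so $G - F$ is disconnected provided $V(G) - N[S]$ is nonempty, which follows from Lemma \ref{spanning}: for $h < 2r-1$ the strict inequality $(2r-h+1)a_h^r < k_1\cdots k_r = |V(G)|$ guarantees $|N[S]| \leq (2r-h+1)a_h^r < |V(G)|$, so vertices remain outside $N[S]$. It remains to check the degree condition on both sides. The component $[S]$ is $h$-regular, hence has minimum degree $h$. For the other side, the component(s) of $G - N[S]$, I must argue every surviving vertex still has degree at least $h$; the worry is a vertex $x \in V(G)-N[S]$ that loses many neighbours to $N(S)$, but Lemma \ref{2 nb} bounds this loss by two, so $\delta(x) \geq 2r - 2 \geq h$ since $h \leq 2r-2$. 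Thus both sides have minimum degree at least $h$ and $F$ is a valid $h$-vertex cut, giving $k^h(G) \leq |F| = (2r-h)a_h^r$.

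\textbf{The main obstacle} is the degree bookkeeping for vertices on the far side of the cut, and here Lemma \ref{2 nb} does the essential work: without the guarantee that each external vertex has at most two neighbours in $N[S]$, one could not rule out that removing $N(S)$ drops some vertex below degree $h$. The rest of the argument is the injective counting of cut-edges (Lemma \ref{1 nb}) and the nonemptiness of the complement (Lemma \ref{spanning}), both of which are direct applications. I would also handle the degenerate small cases, particularly $h = 2r-2$, where the two preliminary lemmas are invoked at the boundary of their stated hypotheses, and confirm that $r=1$ (a single cycle) is consistent, though the substantive content lives in $r \geq 2$.
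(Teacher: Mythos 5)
Your proof is correct and follows essentially the same route as the paper: take $S = V(W_h^r)$, count $|N(S)| = (2r-h)a_h^r$ via Lemma \ref{1 nb}, use Lemma \ref{spanning} to ensure $V(G) - N[S] \neq \emptyset$, and use Lemma \ref{2 nb} to verify the minimum-degree condition $2r-2 \geq h$ on the far side of the cut. Your treatment is, if anything, slightly more explicit than the paper's about why the neighbour count is exact (the injectivity of the edges leaving $S$) rather than merely an upper bound.
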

\begin{proof}
We have  $G=C_{k_1}\Box C_{k_2}\Box \cdots \Box C_{k_r}.$ Let $S$ be the set of vertices of the $h$-regular subgraph $W_h^r$ of $G.$ Then $|S|= a_h^r.$   By Lemma \ref{1 nb},  every vertex of $S$ has $2r - h$ neighbours in  $N(S).$  Hence $|N(S)| = (2r - h) |S| = (2r - h)  a_h^r.$   This gives $|N[S]| = |S \cup N(S)| = |S|+ |N(S)| = (2r - h + 1) a_h^r.$ Therefore, by Lemma \ref{spanning}, $|N[S]| < k_1k_2\dots k_r=|V(G)|.$ Hence $V(G) - N[S]$ is a non-empty set and by Lemma \ref{2 nb}, every member  of this set has at most two neighbours in $N[S].$ Consequently,  the minimum degree of the subgraph of $G$ induced by this set is at least $ 2r - 2 \geq h.$ Already, the minimum degree of the graph $[S] = W_h^r$ is $h.$ Hence the graph $ G - N(S)$ is disconnected and every component of it has minimum degree at least $h.$ Thus $ N(S)$ is a $h$-vertex cut of $G.$ Therefore $ \kappa^h(G)\leq |N(S)| = (2r - h)a_h^r.$  
\end{proof}

 \begin{lemma}\label{vertex cnb}
If $0\leq h \leq 2r-1$  and  $Y$ is a subgraph of the graph $G$ with minimum degree at least $h,$ then $|N[Y]|\geq a_h^r(2r-h+1).$
   \end{lemma}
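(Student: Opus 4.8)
The plan is to induct on $r$, mirroring the case structure of the proof of Lemma~\ref{ahr 1}. For $r=1$ the graph is a single cycle and the statement reduces to the two cases $h=0,1$, both immediate. For $r\geq 2$ write $G=H\Box C_{k_r}$ with $H=C_{k_1}\Box\cdots\Box C_{k_{r-1}}$, so that $G=H^1\cup\cdots\cup H^{k_r}$ together with the matchings $M_1,\dots,M_{k_r}$, and put $Y_i=Y\cap H^i$. Everything rests on the partition identity $|N[Y]|=\sum_{i=1}^{k_r}|N[Y]\cap H^i|$ and the two elementary estimates
\[
 |N[Y]\cap H^i|\ \geq\ |N_{H^i}[Y_i]|\ \ (Y_i\neq\emptyset),\qquad |N[Y]\cap H^i|\ \geq\ |V(Y_{i-1})\cup V(Y_{i+1})|\ \ (Y_i=\emptyset),
\]
which hold because a vertex of $H^i$ can reach $Y$ only inside its own copy or across the two matching edges to $H^{i\pm1}$. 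On a nonempty copy with local minimum degree $d$, the inductive hypothesis of this lemma gives $|N_{H^i}[Y_i]|\geq a_d^{r-1}(2(r-1)-d+1)$ and Lemma~\ref{ahr 1} gives $|V(Y_i)|\geq a_d^{r-1}$.

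The easy cases follow the template of Lemma~\ref{ahr 1}. If $Y=Y_1$ sits in one copy then $\delta_{H^1}(Y_1)\geq h$, so induction yields $|N_{H^1}[Y_1]|\geq a_h^{r-1}(2r-1-h)$, while $M_1$ and $M_{k_r}$ place $|V(Y_1)|\geq a_h^{r-1}$ further vertices in each of $H^2$ and $H^{k_r}$; summing gives $|N[Y]|\geq a_h^{r-1}(2r-h+1)$ and Lemma~\ref{pro ahr}(3) finishes. If every $Y_i$ is nonempty then each has $\delta_{H^i}(Y_i)\geq h-2$, whence $|N[Y]|\geq\sum_i|N_{H^i}[Y_i]|\geq k_r\,a_{h-2}^{r-1}(2r-h+1)$, and Lemma~\ref{pro ahr}(2) gives exactly $a_h^r(2r-h+1)$.

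The main case is when $Y$ meets at least two but not all copies. Here I would work with the maximal runs of nonempty copies. An endpoint copy of a run loses one matching edge into the adjacent empty copy, so it has local minimum degree at least $h-1$ and contributes at least $a_{h-1}^{r-1}(2r-h)$; the flanking empty copy in turn receives at least $a_{h-1}^{r-1}$ matched vertices. Hence each disjoint pair (run-endpoint, adjacent empty copy) is worth $a_{h-1}^{r-1}(2r-h+1)$, and since $2a_{h-1}^{r-1}=a_h^r$ by Lemma~\ref{pro ahr}(1), two disjoint such pairs already force $|N[Y]|\geq a_h^r(2r-h+1)$. Two disjoint pairs are available whenever there are at least two empty copies: for several runs take the left endpoints of two different runs, and for a single run take its two endpoints with their two distinct flanking empties. (A run of length one is even better, as its endpoint then has local degree $\geq h$ and that one copy alone supplies the whole bound.)

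The delicate configuration --- the step I expect to be the main obstacle --- is a single run covering all copies but one, so that there is exactly one empty copy and the two run-endpoints share it as their only flank. Counting $|V(Y_1)\cup V(Y_{k_r-1})|$ on that copy recovers only $a_{h-1}^{r-1}$ of the required $2a_{h-1}^{r-1}$ surplus, so I would recover the missing $a_{h-1}^{r-1}$ from the interior copies $2,\dots,k_r-2$, each nonempty with local minimum degree $\geq h-2$; as $k_r\geq4$ at least one such copy exists and their combined contribution is at least $(k_r-3)\,a_{h-2}^{r-1}(2r-h+1)$. Closing the case then amounts to the numerical inequality $(k_r-3)(2r-h+1)\,a_{h-2}^{r-1}\geq a_{h-1}^{r-1}$, which I would verify from $k_r\geq4$ and the explicit values of the $a^{r-1}$'s supplied by Lemma~\ref{pro ahr}. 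Lastly, the boundary values $h\in\{0,1\}$ (where $h-2$ must be floored at $0$) and $h\in\{2r-2,\,2r-1\}$ (where the inductive hypothesis sits at the edge of its range and the relevant pieces of $Y$ are forced to be entire copies) are disposed of by direct computation.
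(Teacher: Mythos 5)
Your proposal is correct, and its skeleton --- induction on $r$, the decomposition $G=H\Box C_{k_r}$, the case split on which copies $Y$ meets, and the appeals to Lemmas \ref{ahr 1} and \ref{pro ahr} --- coincides with the paper's; the two easy cases are verbatim the paper's Cases 1 and 2. The genuine divergence is in the mixed case. The paper first assumes $Y$ connected (so there is a single run of nonempty copies), takes the extreme copies $Y_1$ and $Y_t$, and writes $|N[Y]|\geq |N_{H^1}[Y_1]|+|N_{H^t}[Y_t]|+|V(Y_1)|+|V(Y_t)|$, the last two terms being matched neighbours of $Y_1$ and $Y_t$ in flanking copies. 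As written, this tacitly assumes those two matched sets lie in distinct copies, which fails exactly in your ``delicate configuration'' $t+1=k_r$ (one empty copy): both sets then sit in $H^{k_r}$ and may coincide, e.g.\ when $Y_1$ and $Y_t$ have equal projections. The paper never addresses this; the cheap repair is to route one endpoint's matching edges into the adjacent \emph{nonempty} copy (into $H^2$ when $t\geq 3$; when $t=2$ the flanks $H^{k_r}$ and $H^3$ are already distinct since $k_r\geq 4$), since those matched vertices are counted in no other term of the sum. You instead recover the shortfall from the interior copies, and your closing inequality $(k_r-3)(2r-h+1)\,a_{h-2}^{r-1}\geq a_{h-1}^{r-1}$ does hold: by Lemma \ref{pro ahr}(1),(2), $a_{h-1}^{r-1}=\tfrac12 a_h^r\leq \tfrac{k_r}{2}a_{h-2}^{r-1}$, while $(k_r-3)(2r-h+1)\geq 2(k_r-3)\geq \tfrac{k_r}{2}$ precisely when $k_r\geq 4$. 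So your route costs an extra numerical verification and the bookkeeping of runs (which the paper's connectivity reduction makes unnecessary), but it buys rigor at the one spot where the paper's own argument is loose; conversely the paper's route is shorter once the flanking copies are chosen correctly. Two cosmetic points: your estimate $|N[Y]\cap H^i|\geq |V(Y_{i-1})\cup V(Y_{i+1})|$ for empty $H^i$ should be stated as a union of \emph{projections} into $H^i$ (as disjoint vertex sets the right-hand side would wrongly be a sum --- your later use, lower-bounding it by the maximum, shows you mean projections); and the boundary values $h\in\{2r-2,2r-1\}$, which you defer to direct computation, are indeed handled exactly that way in the paper, since there the relevant $Y_i$ are forced to equal whole copies $H^i$.
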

   \begin{proof}
   	We proceed by induction on $r.$ The result obviously holds for $r = 1$ and also for the case $ h = 0.$   Suppose $ r\geq 2$ and $ h \geq 1.$  Assume that the result holds for a graph that is the  product of $r-1$ cycles.  Let $G = C_{k_1}\Box C_{k_2}\Box \cdots \Box C_{k_{r}}.$ Then  $G=H 
   	\Box C_{k_r}$, where $H =C_{k_1}\Box C_{k_2}\Box \cdots \Box C_{k_{r-1}}.$ Then $G$ contains $k_r$ vertex-disjoint copies $H^1, H^2, \dots, H^{k_r}$ of $H.$ Then every vertex of $H^i$ has one neighbour in $H^{i-1}$ and $H^{i+1},$ where the addition and subtraction in the superscript is carried out modulo $k_r.$ Let $Y$ be a subgraph of  $G$ with $\delta(Y) \geq h.$ 
   If $Y$  spans $G,$ then, by $\delta(K),$  $|N[Y]| = |V(Y)| = k_1 k_2\dots k_r \geq a_h^r(2r-h+1).$  Therefore we may assume that $Y$ does  not span $G.$  Since the minimum degree of every connected component of  $Y$ is at least the minimum degree of $Y,$ we may assume that $Y$ is connected. 
   
   Note that  $Y$ intersects at least one copy of $H^i.$   Let $ Y_i = Y\cap H^i$ for $i=1,2,\dots,k_r.$  
 \vskip.2cm
 \noindent
   {\it Case 1.} $ Y_i \neq \emptyset$ for only one value of $i.$  
   \vskip.15cm
   \noindent
   We may assume that only $ Y_1 $ is non-empty. Then $Y = Y_1$ is contained in the graph $H^1.$  Since $H^1$  is $(2r -2)$-regular,  $h \leq 2r-2.$ Also, the minimum degree of $Y$ in $H^1$ is at least $h.$ We have $N[Y] = N_{H^1}[Y] \cup N_{H^{k_r}}(Y)\cup N_{H^{2}}(Y).$ If $ h = 2r-2,$ then $ Y = H^1$ and so, $N[Y]= V(H^1) \cup V(H^2)\cup V(H^{k_r})$ giving  $$|N[Y] |\geq 3|V(H^1)| = 3  k_1k_2\dots k_{r-1}\geq  12 k_1k_2\dots k_{r-2}= (2r-h+1)a_h^r.$$  Suppose  $0 \leq h \leq 2r-3 = 2(r-1)-1.$ Then, by induction, $|N_{H^1}[Y]|\geq a_h^{r-1}(2r-h-1).$ Also, by Lemma \ref{ahr 1}, $|N_{H^{k_r}}(Y)| =  |N_{H^{2}}(Y)| = |V(Y)| \geq a_h^{r-1}.$  Therefore, by Lemma \ref{pro ahr}(3), $$|N[Y]|\geq a_h^{r-1}(2r-h-1)+2a_h^{r-1} = (2r-h+1)a_h^{r-1}\geq  (2r-h+1)a_h^r.$$
 \vskip.1cm
  \noindent
 {\it Case 2.} $Y_i\neq \emptyset $ for all $i=1,2,\dots,k_r.$   
 \vskip.15cm
 \noindent
  In this case,  $N[Y] \supseteq N_{H^1}[Y_1]\cup N_{H^2}[Y_2]\cup \dots \cup N_{H^{k_r}}[Y_{k_r}].$ If  $h=1,$ then $\delta_{H^i}(Y_i) \geq  0$ and so, by induction, $|N_{H^i}[Y_i]|\geq a_{0}^{r-1}(2(r-1)-0+1)=2r-1$ giving 
 $$ |N[Y]|\geq |N_{H^1}[Y_1]|+ |N_{H^2}[Y_2]| + \dots + |N_{H^{k_r}}[Y_{k_r}]| \geq k_r (2r-1)\geq  8r-4\geq 4r \geq a_1^r(2r) =a_{h}^{r}(2r-h+1) .$$
 Suppose $h\geq 2$. Then  $\delta_{H^i}(Y_i) \geq  h-2$  and so, by  induction, $|N_{H^i}[Y_i]|\geq a_{h-2}^{r-1}(2r-h+1)$ for all $i.$  Therefore, by Lemma \ref{pro ahr}(2),
 $$ |N[Y]|\geq |N_{H^1}[Y_1]|+ |N_{H^2}[Y_2]| + \dots + |N_{H^{k_r}}[Y_{k_r}]| \geq k_r a_{h-2}^{r-1}(2r-h+1) \geq a_{h}^{r}(2r-h+1) .$$
 \vskip.1cm
  \noindent
  {\it Case 3.} $Y_i\neq \emptyset $ for more than one but not all values of $i.$ 
  \vskip.15cm
  \noindent  
 We may assume that $ Y_1 $ is non-empty but $Y_{k_r}$ is empty. Let $ t$ be the largest integer such that $Y_t$ is non-empty. Then $ 1 < t < k_r$ ; see Figure 3. The minimum degree of $Y_i$ in $H^i$ is at least $h-1$ for $ i = 1, t$ and it is at least $h - 2$ for $ 2 < i < t.$ 
 Suppose that $h=2r-1.$ Then $Y_1 = H^1$  and $Y_t = H^t.$ Hence  $N[Y]\supseteq V(H^1)\cup V(H^2) \cup V(H^t) \cup V(H^{t+1})\cup  V(H^{k_r}).$ Since $ k_r \geq 4,$ $ t \neq 2$ or $ t+1 \neq k_r$ and $|V(H^1)| = |V(H^i)| $ for all $i > 1.$  By Lemma \ref{spanning}, $$|N[Y]|\geq 4|V(H^1)|= 4|V(H)| \geq 4k_1k_2\dots k_{r-1} \geq k_1k_2\dots k_{r-1} \geq (2r-h+1)a_h^r.$$

\begin{center}
\unitlength 1.3mm 
\linethickness{0.4pt}
\ifx\plotpoint\undefined\newsavebox{\plotpoint}\fi 
\begin{picture}(74.5,58.75)(0,0)
\put(-.75,8){\framebox(9.5,26.25)[cc]{}}
\put(31.75,8){\framebox(9.5,26.25)[cc]{}}
\put(14.75,8){\framebox(9.5,26.25)[cc]{}}
\put(47.25,8){\framebox(9.5,26.25)[cc]{}}
\put(65,8){\framebox(9.5,26.25)[cc]{}}
\put(8.75,29.75){\line(1,0){6}}
\put(41.25,29.75){\line(1,0){6}}
\put(9,25.25){\line(1,0){5.75}}
\put(41.5,25.25){\line(1,0){5.75}}
\put(8.75,15.25){\line(1,0){6}}
\put(41.25,15.25){\line(1,0){6}}
\put(12,21.5){\makebox(0,0)[cc]{$\vdots$}}
\put(27.75,21.5){\makebox(0,0)[cc]{$\vdots$}}
\put(60.75,21.5){\makebox(0,0)[cc]{$\vdots$}}
\put(44.5,21.5){\makebox(0,0)[cc]{$\vdots$}}
\qbezier(0,34.25)(34,58.75)(73,34.25)
\qbezier(3,34.5)(32.875,52)(69.25,34.5)
\qbezier(8.25,34.25)(35.25,42.375)(65.25,34)
\put(36.5,41.25){\makebox(0,0)[cc]{$\vdots$}}
\put(3.75,21.75){\oval(5.5,10.5)[]}
\put(19.75,22){\oval(5.5,10.5)[]}
\put(36.75,22){\oval(5.5,10.5)[]}

\put(24.696,14.946){\line(1,0){.9688}}
\put(26.633,14.946){\line(1,0){.9688}}
\put(28.571,14.946){\line(1,0){.9688}}
\put(30.508,14.946){\line(1,0){.9688}}
\put(24.196,24.946){\line(1,0){.9688}}
\put(26.133,24.946){\line(1,0){.9688}}
\put(28.071,24.946){\line(1,0){.9688}}
\put(30.008,24.946){\line(1,0){.9688}}
\put(24.446,29.946){\line(1,0){.9688}}
\put(26.383,29.946){\line(1,0){.9688}}
\put(28.321,29.946){\line(1,0){.9688}}
\put(30.258,29.946){\line(1,0){.9688}}
\put(57.446,15.696){\line(1,0){.9688}}
\put(59.383,15.696){\line(1,0){.9688}}
\put(61.321,15.696){\line(1,0){.9688}}
\put(63.258,15.696){\line(1,0){.9688}}
\put(56.946,25.696){\line(1,0){.9688}}
\put(58.883,25.696){\line(1,0){.9688}}
\put(60.821,25.696){\line(1,0){.9688}}
\put(62.758,25.696){\line(1,0){.9688}}
\put(57.196,30.696){\line(1,0){.9688}}
\put(59.133,30.696){\line(1,0){.9688}}
\put(61.071,30.696){\line(1,0){.9688}}
\put(63.008,30.696){\line(1,0){.9688}}
\put(4,21){\makebox(0,0)[cc]{$Y_1$}}
\put(19.75,22){\makebox(0,0)[cc]{$Y_2$}}
\put(37,21.75){\makebox(0,0)[cc]{$Y_t$}}
\put(3.25,5){\makebox(0,0)[cc]{$H^1$}}
\put(19,5.25){\makebox(0,0)[cc]{$H^2$}}
\put(36,5){\makebox(0,0)[cc]{$H^t$}}
\put(51.25,5){\makebox(0,0)[cc]{$H^{t+1}$}}
\put(69.5,5.5){\makebox(0,0)[cc]{$H^{k_r}$}}
\put(36.25,-2){\makebox(0,0)[cc]{Figure 3. The graph $G$ with $Y_j=\emptyset$ for $t<j\leq k_r$ }}
\end{picture}

 \end{center}

\vskip.3cm
 Suppose that $0\leq h \leq 2r-2.$  The graph $Y_i$ has $|V(Y_i)|$ neighbours in $H^{i-1}$ and $H^{i+1}$ for $ i = 1, t.$ Therefore  $|N[Y]| \geq |N_{H^1}[Y_1] | + |N_{H^t}[Y_{t}]| + |V(Y_1)| + |V(Y_t)|.$   If $  i \in \{1, t\}, $ then $\delta_{H^i}(Y_i)\geq h-1$ and so, by induction, $|N_{H^{i}}[Y_i]|\geq a_{h-1}^{r-1}(2r-h).$ Also, by Lemma \ref{ahr 1}, $|V(Y_i)|\geq a_{h-1}^{r-1}.$ Hence, by Lemma \ref{pro ahr}(1), we have 
 $$|N[Y]| \geq  2a_{h-1}^{r-1}(2r-h)+2a_{h-1}^{r-1} = a_{h}^{r}(2r-h)+a_{h}^{r} = a_{h}^{r}(2r-h+1).$$

 Thus $|N[Y]|\geq a_{h}^{r}(2r-h+1)$ in each case. This completes the proof.    \end{proof}
\begin{proposition}
 If $0\leq h \leq 2r-2$ and $S$ is a  conditional $h$-edge cut of the graph $G, $ then $|S| \geq a_h^r(2r-h).$  
 \end{proposition}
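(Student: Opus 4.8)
The plan is to reduce the assertion to a two--sided edge--isoperimetric inequality on $G$ and then establish that inequality by induction on $r$, reusing the slice decomposition behind Lemma~\ref{vertex cnb}. Since $S$ is a conditional $h$--edge cut, $G-S$ is disconnected and each of its components has minimum degree at least $h$. Fix one component, let $A$ be its vertex set and put $B=V(G)-A$. No vertex is removed, so $A$ and $B$ partition $V(G)$; and as $A$ is a whole component of $G-S$, every edge of $G$ joining $A$ to $B$ must lie in $S$. Writing $\partial(A)$ for the set of edges of $G$ with exactly one end in $A$, we get $|S|\ge|\partial(A)|$. Moreover each $v\in A$ retains all of its $(G-S)$--neighbours inside $A$, whence $\delta(G[A])\ge h$, and symmetrically $\delta(G[B])\ge h$. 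It therefore suffices to prove: \emph{if $\emptyset\neq A\subsetneq V(G)$ satisfies $\delta(G[A])\ge h$ and $\delta(G[B])\ge h$ with $B=V(G)-A$, then $|\partial(A)|\ge(2r-h)a_h^r$.}

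I would prove this edge--isoperimetric bound by induction on $r$. The values $h\le 1$ are handled directly: for $h=0$ the graph $G$ is $2r$--edge--connected by Lemma~\ref{regular}, so $|\partial(A)|\ge 2r=(2r-h)a_h^r$, and $h=1$ follows from the $2(r-1)$--edge--connectivity of $H$ together with $k_r\ge4$. For the inductive step take $r\ge2$ and $h\ge2$, and write $G=H\Box C_{k_r}$ with copies $H^1,\dots,H^{k_r}$ and matchings $M_1,\dots,M_{k_r}$; set $A_i=A\cap V(H^i)$. Then $|\partial(A)|$ equals $\sum_i|\partial_{H^i}(A_i)|$ plus the number of matching edges of the $M_i$ having one end in $A$ and one in $B$. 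Because $\partial(A)=\partial(B)$, I may run the argument on whichever of $A,B$ does \emph{not} meet every copy $H^i$; denote that set by $X$, so that $\delta(G[X])\ge h$.

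\textbf{(I) $X$ lies in a single copy, say $X\subseteq V(H^1)$.} Then both matching edges at each vertex of $X$ go into $B$, and $\delta(H[X])\ge h$. Lemma~\ref{vertex cnb} in dimension $r-1$ gives $|N_H[X]|\ge(2r-h-1)a_h^{r-1}$, so $|\partial_{H^1}(X)|\ge|N_H(X)|\ge(2r-h-1)a_h^{r-1}-|X|$; adding the $2|X|$ vertical edges and using $|X|\ge a_h^{r-1}$ from Lemma~\ref{ahr 1} gives $|\partial(X)|\ge(2r-h)a_h^{r-1}\ge(2r-h)a_h^r$ by Lemma~\ref{pro ahr}(3) (the case $h=2r-2$ forces $X=V(H^1)$ and is settled by a direct count using $k_{r-1}\ge4$). \textbf{(II) $X$ meets a proper, nonempty family of copies.} Going around the cycle $C_{k_r}$, a parity argument shows at least two copies $H^i$ meet $X$ while an adjacent copy does not; at each such transition copy $\delta(H[X_i])\ge h-1$ and every vertex of $X_i$ contributes a matching edge into the empty neighbour. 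As in (I), Lemma~\ref{vertex cnb} yields a contribution of at least $(2r-h)a_{h-1}^{r-1}$ from each transition copy, and these edge sets are disjoint, so $|\partial(X)|\ge2(2r-h)a_{h-1}^{r-1}=(2r-h)a_h^r$ by Lemma~\ref{pro ahr}(1). \textbf{(III) both $A$ and $B$ meet every copy.} Every copy is then mixed, giving $\delta(H^i[A_i])\ge h-2$ and $\delta(H^i[B_i])\ge h-2$; the inductive hypothesis with parameter $h-2$ supplies $|\partial_{H^i}(A_i)|\ge(2r-h)a_{h-2}^{r-1}$ for each $i$, and summing over the $k_r$ copies with Lemma~\ref{pro ahr}(2) gives $|\partial(A)|\ge k_r(2r-h)a_{h-2}^{r-1}\ge(2r-h)a_h^r$.

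The main obstacle is the slice bookkeeping rather than any single estimate. One must check that in Cases (I) and (II) the horizontal boundaries, bounded through Lemma~\ref{vertex cnb}, and the vertical matching edges are counted without overlap, and that the cyclic pattern of occupied copies really forces two distinct transition copies in Case (II). The role of the second hypothesis $\delta(G[B])\ge h$ is decisive and surfaces only in Case (III): it is exactly this condition that the spurious configuration in which $B$ is a single vertex violates, and it is what licenses applying the inductive edge--isoperimetric bound, with parameter $h-2$, simultaneously to $A_i$ and $B_i$ in every mixed copy. Finally one verifies that all three regimes meet the common target $(2r-h)a_h^r$ through the three identities of Lemma~\ref{pro ahr}, and that the low values $h\in\{0,1\}$ are absorbed by the edge--connectivity of $G$ and of $H$.
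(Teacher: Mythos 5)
You have proved the edge-cut reading of the statement; note that the paper's own proof of this particular proposition actually treats $S$ as a \emph{vertex} cut (the word ``edge'' in its statement is evidently a misprint, as the proof opens with ``As $S$ is a $h$-vertex cut of $G$''), and the genuine edge-cut bound is the Proposition of Section 4. Measured against that Section 4 proof, your argument is essentially the paper's: the same decomposition $G=H\Box C_{k_r}$ into slices $H^1,\dots,H^{k_r}$ joined by matchings, the same trichotomy (support confined to one copy; to some but not all copies; to all copies), the same appeals to Lemma \ref{vertex cnb}, Lemma \ref{ahr 1} and the three identities of Lemma \ref{pro ahr}, and the same descent to parameter $h-2$ in the all-copies case. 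Your repackaging as a two-sided edge-isoperimetric inequality for a partition $(A,B)$, together with the estimate $|\partial_{H^i}(X_i)|+|X_i|\geq |N_{H^i}[X_i]|$, is exactly the content of the paper's Lemma \ref{edge cnb} (proved there from Lemma \ref{vertex cnb} via $|N(Y)|\leq |E_r(Y)|$), so you have merely inlined that intermediate lemma; likewise your remark that the two-sided hypothesis is what licenses the induction in Case (III) corresponds to the paper's observation that $E_{r-1}(Y_i)$ is a conditional $(h-2)$-edge cut of $H^i$, which also needs both $Y_i$ and $Z_i$ nonempty.

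The one point needing repair is your treatment of $h=1$ as a stand-alone base case. The claim that it ``follows from the $2(r-1)$-edge-connectivity of $H$ together with $k_r\geq 4$'' is insufficient when $X$ is confined to a single copy: edge-connectivity of $H$ plus the two vertical edges per vertex gives only $|\partial(X)|\geq (2r-2)+2|X|$, which for $X$ a single edge yields $2r+2$, short of the target $2(2r-1)=4r-2$ once $r\geq 3$. The fix is already inside your own write-up: Cases (I) and (II) go through verbatim for $h=1$ (Lemma \ref{vertex cnb} applies, and in Case (I) one gets $|\partial(X)|\geq (4r-4)+|X|\geq 4r-2$ since $|X|\geq a_1^{r-1}=2$); it is only Case (III), where $h-2<0$ blocks the induction, that should be settled by the per-slice bound $|\partial_{H^i}(A_i)|\geq 2r-2$ from the $(2r-2)$-edge-connectivity of $H$, summed over $k_r\geq 4$ slices to give $k_r(2r-2)\geq 4r-2$. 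This is precisely how the paper disposes of $h=1$ inside its Case 3, so the slip is organizational rather than substantive, and with that rerouting your proof is correct and coincides with the paper's.
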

\begin{proof}
We argue  by induction on $r.$ The result follows easily for $r=1$ and so for $h=0.$  Suppose $r\geq 2$ and $h \geq 1.$ Assume that the  result holds true for the Cartesian product of $r-1$ cycles, each of length at least 4. Let  $ G = C_{k_1} \Box C_{k_2} \Box \dots \Box C_{k_r}.$   Then $G = H \Box C_{k_r},$ where $ H =C_{k_1} \Box C_{k_2} \Box \dots \Box C_{k_{r-1}}.$ Then $G$ is obtained by replacing $i$th vertex of $C_{k_r}$ by the copy $H^i$ of $H$ and replacing each edge $C_{k_r}$  by the  matching between the two copies of $H^i$ corresponding to the end vertices of that edge.     

As $S$ is a  $h$-vertex cut of $G,$ the graph $ G - S$ is disconnected and each component of it has minimum degree $h.$ Let $Y$ be a subgraph of $G-S$ consisting of at least one but not all components of $G-S$ and let $Z$ be the subgraph consisting of the remaining components. Thus  $G-S=Y\cup Z$ and further, $ \delta (Y)\geq h$ and $\delta(Z) \geq h.$  As $S$ is a cut, $N(Y)\subseteq S $ and $N(Z)\subseteq S $  and so,  $|S|\geq |N(Y)|$  and  $|S|\geq |N(Z)|.$  Note that $Y$ and $Z$ each intersects  $H^i$  for at least one $i.$  Let $S_i=S\cap V(H^{i}),$ $Y_i=Y\cap V(H^{i})$ and $Z_i=Z\cap V(H^{i}).$  Depending upon the nature of $Y$ and $Z$, the  proof is divided into several cases.
\vskip.2cm
 \noindent
 {\it Case 1.} Suppose $Y_i\neq \emptyset$ for only one $i.$
 \vskip.15cm
 \noindent
  We may assume that only $Y_1$ is non-empty. Then $Y=Y_1$ is contained in $H^1.$ Therefore the minimum degree of $Y_1$ in $H^1$ is at least $h.$ As $H^1$ is $(2r-2)$-regular, $0\leq h \leq 2r-2.$  Suppose $h=2r-2.$ Then $Y=H^1$ and  $N(Y)=V(H^{k_r})\cup V(H^2).$ 
 Therefore
  $$ |S|\geq |N(Y)|=  |V(H^{k_r})|+|V(H^{2})| = 2k_1k_2\dots k_{r-2}k_{r-1}\geq 8k_1k_2\dots k_{r-2}= a_{h}^{r}(2r-h).$$
  
 Suppose that $0\leq h \leq 2r-3=2(r-1)-1.$  The graph $Y$ has $|V(Y)|$ neighbours in each of $H^{k_r}$ and $H^{2}.$  Therefore  
$|N(Y)| =|N_{H^1}(Y)|+ |V(Y)|+ |V(Y)|= |N_{H^1}[Y]|+ |V(Y)|. $ By Lemmas \ref{pro ahr}(3), \ref {ahr 1} and \ref{vertex cnb},  
$$ |S|\geq |N(Y)|\geq a_{h}^{r-1}(2r-h-1)+  a_{h}^{r-1}=  a_{h}^{r-1}(2r-h)\geq a_{h}^{r}(2r-h).$$
   \noindent
  \textit{Case 2.} Suppose $Y_i\neq \emptyset$ for more than one but not all values of $i.$
  \vskip.15cm
  \noindent
    We may assume that $Y_1$ is non empty but $Y_{k_r}$ is empty.  Let $t$ be the largest integer such that $Y_t$ is non-empty. Then $1<t<k_r.$ The minimum degree of $Y_i$ in $H^i$ is at least $h-1$ for $i =1,~t$ and it is at least $h-2$ for $2<i<t.$ We consider the following subcases. 
 \vskip.15cm 
\indent  \textit{Case 2.1.} Suppose $Y_i=\emptyset$ for more than one value of $i.$ 

In this case  $t<k_r-1.$ The graph $Y_1$ has $|V(Y_1)|$ neighbours in $H^{K_r}.$ Similarly, the graph $Y_t$ has $|V(Y_t)|$ neighbours in $H^{t+1}.$ Hence $N(Y)\supseteq N_{H^1}(Y_1)\cup V(Y_1)\cup N_{H^t}(Y_t)\cup V(Y_t).$ Therefore, by Lemmas \ref{pro ahr}(1) and \ref{vertex cnb},
$$ |N(Y)|\geq  |N_{H^1}(Y_1)|+ |V(Y_1)|+|N_{H^t}(Y_t)|+|V(Y_t)| \geq |N_{H^1}[Y_1]|+|N_{H^t}[Y_t]|\geq (2r-h)a_{h-1}^{r-1}\geq  a_{h}^{r}(2r-h).$$
 \indent  \textit{Case 2.2.} Suppose $Y_i=\emptyset$ for exactly one $i.$ 
 
Suppose $t=k_r-1.$ Here we calculate $|S_i|$ by using Lemma \ref{vertex cnb} or induction. To use induction, we need to consider the nature of the graph $Z$ also.  If $Z_i\neq \emptyset $ for only one value of $i,$ then result follows from Case 1. If $Z_i=\emptyset $ for more than one value of $i,$ then the result follows from Case 2.1. It remains to consider the following cases about $Z.$
   \vskip.15cm
      \indent \textit{Subcase 1.}  $Z_i=\emptyset$ for exactly one value of $i.$  
      We have again the following two subcases.
         
         \indent \textit{(i).} Suppose $Z_{k_r}=\emptyset.$ Recall that $Y_{k_r}=\emptyset; $ see Figure 4(a)
The graph $Y_1$ has $|V(Y_1)|$ neighbours in $H^{k_r}.$  Hence $S_{k_r}$ must contains $|V(Y_1)|$ vertices.  Thus $S_1\cup S_{k_r}\supseteq N_{H^1}(Y_1)\cup V(Y_1)\supseteq N_{H^1}[Y_1]$ and by Lemma \ref{vertex cnb}, $|N_{H^1}[Y_1]|\geq (2r-h)a_{h-1}^{r-1}.$
 Hence, $|S_1\cup S_{k_r}|\geq (2r-h)a_{h-1}^{r-1}.$

Suppose $h=1$. Then $\delta_{H^i}(Y_i)\geq 0$ and $\delta_{H^i}(Z_i)\geq 0$ for all $i.$ Hence $|S_1\cup S_{k_r}|\geq (2r-1)a_0^{r-1}=(2r-1).$ By induction, $|S_i|\geq (2(r-1)-0)a_0^{r-1}=2r-2$ for $i\in \{2, 3, \dots k_r-1\}.$ Note that  $S$ contains $S_1\cup S_2\cup \dots S_{k_r-2}\cup S_{k_r-1}.$ Therefore
\begin{eqnarray*}
|S|& \geq &  (|S_1 \cup S_{k_r}|)+ \sum\limits_{i=2}^{i=k_r-1} |S_i| \\
 &\geq & (2r-1)+ \sum\limits_{i=2}^{i=k_r-1}(2r-2) \\
 &=& (2r-1)+(2r-2)(k_r-2)\\
	 & \geq & 2(2r-1) \\
	 &= & (2r-h)a_h^r. 
\end{eqnarray*}
By our assumption, both $Y_{k_r-1}$ and $Z_{k_r-1}$ are non-empty. Note that $\delta_{H_{k_r-1}}(Y_{k_r-1}) \geq h-1$ and $\delta_{H_{k_r-1}}(Z_{k_r-1}) \geq h-1$. As $h-1>h-2,$ we can say that $\delta_{H_{k_r-1}}(Y_{k_r-1}) \geq h-2$ and $\delta_{H_{k_r-1}}(Z_{k_r-1}) \geq h-2$. Thus $S_{k_r-1}$ is a $(h-2)$-cut in $H^{k_r-1}.$ Also $Y_i$ and $Z_i$ both are non-empty sets with minimum degree $h-2$ in $H^i$ for $i\in \{2, 3, \dots k_r-2\}.$ 

Suppose $h\geq 2$. Then $S_i$ is an $(h-2)$-cut in $H^i$  and so, by induction, $|S_i|\geq (2r-h)a_{h-2}^{r-1}$ for $i\in \{2, 3, \dots k_r-1  \}.$
As $S$ contains $S_1\cup S_1\cup \dots S_{k_r-2}\cup S_{k_r-1},$ we have
 \begin{eqnarray*}
|S| &\geq & (|S_1 \cup S_{k_r}|)+ \sum\limits_{i=2}^{i=k_r-1} |S_i|\\
    &\geq & (2r-h)a_{h-1}^{r-1}+ \sum\limits_{i=2}^{i=k_r-1}(2r-h)a_{h-2}^{r-1}\\
    & = & (2r-h)a_{h-1}^{r-1}+(k_r-2)(2r-h)a_{h-2}^{r-1}\\
    & \geq & (2r-h)a_{h-1}^{r-1}+ \frac{k_r}{2} a_{h-2}^{r-1}(2r-h)..............\rm{(since~~ k_r~~\geq~~ 4)}\\
    & \geq & (2r-h)a_{h-1}^{r-1}+ \frac{1}{2} a_{h}^{r}(2r-h)....................\rm{(by ~~Lemma~~\ref{pro ahr} (2))}\\
    & = & (2r-h)a_{h-1}^{r-1}+ a_{h-1}^{r-1}(2r-h)....................\rm{(by~~Lemma~~\ref{pro ahr} (1))}\\
    & =& 2a_{h-1}^{r-1}(2r-h)\\
    & = & a_{h}^{r}(2r-h)................................................\rm{(by~~Lemma~~\ref{pro ahr}(1))}
\end{eqnarray*}

\begin{center} 
\unitlength 1mm 
\linethickness{0.4pt}
\ifx\plotpoint\undefined\newsavebox{\plotpoint}\fi 
\begin{picture}(159.393,58.75)(0,0)
\put(.75,8){\framebox(9.5,26.25)[cc]{}}
\put(84.143,8){\framebox(9.5,26.25)[cc]{}}
\put(33.25,8){\framebox(9.5,26.25)[cc]{}}
\put(116.643,8){\framebox(9.5,26.25)[cc]{}}
\put(16.25,8){\framebox(9.5,26.25)[cc]{}}
\put(99.643,8){\framebox(9.5,26.25)[cc]{}}
\put(48.75,8){\framebox(9.5,26.25)[cc]{}}
\put(132.143,8){\framebox(9.5,26.25)[cc]{}}
\put(66.5,8){\framebox(9.5,26.25)[cc]{}}
\put(149.893,8){\framebox(9.5,26.25)[cc]{}}
\put(10.25,29.75){\line(1,0){6}}
\put(93.643,29.75){\line(1,0){6}}
\put(42.75,30){\line(1,0){6}}
\put(126.143,30){\line(1,0){6}}
\put(10.25,26){\line(1,0){6}}
\put(93.643,26){\line(1,0){6}}
\put(42.75,26.25){\line(1,0){6}}
\put(126.143,26.25){\line(1,0){6}}
\put(10.5,13.5){\line(1,0){6}}
\put(93.893,13.5){\line(1,0){6}}
\put(43,13.75){\line(1,0){6}}
\put(126.393,13.75){\line(1,0){6}}
\put(13.5,21.5){\makebox(0,0)[cc]{$\vdots$}}
\put(96.893,21.5){\makebox(0,0)[cc]{$\vdots$}}
\put(29.75,21.5){\makebox(0,0)[cc]{$\vdots$}}
\put(113.143,21.5){\makebox(0,0)[cc]{$\vdots$}}
\put(62.25,21.75){\makebox(0,0)[cc]{$\vdots$}}
\put(145.643,21.75){\makebox(0,0)[cc]{$\vdots$}}
\put(46,21.5){\makebox(0,0)[cc]{$\vdots$}}
\put(129.393,21.5){\makebox(0,0)[cc]{$\vdots$}}
\qbezier(1.5,34.25)(35.5,58.75)(74.5,34.25)
\qbezier(84.893,34.25)(118.893,58.75)(157.893,34.25)
\qbezier(4.5,34.5)(34.375,52)(70.75,34.5)
\qbezier(87.893,34.5)(117.768,52)(154.143,34.5)
\qbezier(9.75,34.25)(36.75,42.375)(66.75,34)
\qbezier(93.143,34.25)(120.143,42.375)(150.143,34)
\put(37.5,41.5){\makebox(0,0)[cc]{$\vdots$}}
\put(120.893,41.5){\makebox(0,0)[cc]{$\vdots$}}
\put(5.25,27.25){\oval(5.5,10.5)[]}
\put(88.643,27.25){\oval(5.5,10.5)[]}
\put(20.75,27.25){\oval(5.5,10.5)[]}
\put(104.143,27.25){\oval(5.5,10.5)[]}
\put(37.5,27.25){\oval(5.5,10.5)[]}
\put(37.5,13.723){\oval(5.5,10.5)[]}
\put(120.893,27.25){\oval(5.5,10.5)[]}
\put(53.5,27.25){\oval(5.5,10.5)[]}
\put(136.893,27.25){\oval(5.5,10.5)[]}
\put(5,13.75){\oval(5.5,10.5)[]}
\put(88.393,13.75){\oval(5.5,10.5)[]}
\put(20.5,13.75){\oval(5.5,10.5)[]}
\put(103.893,13.75){\oval(5.5,10.5)[]}
\put(53.25,13.75){\oval(5.5,10.5)[]}
\put(136.643,13.75){\oval(5.5,10.5)[]}
\put(154.143,14){\oval(5.5,10.5)[]}
\put(33.446,30.196){\line(0,1){0}}
\put(116.839,30.196){\line(0,1){0}}
\put(26.196,29.946){\line(1,0){.9688}}
\put(28.133,30.008){\line(1,0){.9688}}
\put(30.071,30.071){\line(1,0){.9688}}
\put(32.008,30.133){\line(1,0){.9688}}
\put(109.589,29.946){\line(1,0){.9688}}
\put(111.527,30.008){\line(1,0){.9688}}
\put(113.464,30.071){\line(1,0){.9688}}
\put(115.402,30.133){\line(1,0){.9688}}
\put(58.696,30.196){\line(1,0){.9688}}
\put(60.633,30.258){\line(1,0){.9688}}
\put(62.571,30.321){\line(1,0){.9688}}
\put(64.508,30.383){\line(1,0){.9688}}
\put(142.089,30.196){\line(1,0){.9688}}
\put(144.027,30.258){\line(1,0){.9688}}
\put(145.964,30.321){\line(1,0){.9688}}
\put(147.902,30.383){\line(1,0){.9688}}
\put(26.196,25.946){\line(1,0){.9688}}
\put(28.133,26.008){\line(1,0){.9688}}
\put(30.071,26.071){\line(1,0){.9688}}
\put(32.008,26.133){\line(1,0){.9688}}
\put(109.589,25.946){\line(1,0){.9688}}
\put(111.527,26.008){\line(1,0){.9688}}
\put(113.464,26.071){\line(1,0){.9688}}
\put(115.402,26.133){\line(1,0){.9688}}
\put(58.696,26.196){\line(1,0){.9688}}
\put(60.633,26.258){\line(1,0){.9688}}
\put(62.571,26.321){\line(1,0){.9688}}
\put(64.508,26.383){\line(1,0){.9688}}
\put(142.089,26.196){\line(1,0){.9688}}
\put(144.027,26.258){\line(1,0){.9688}}
\put(145.964,26.321){\line(1,0){.9688}}
\put(147.902,26.383){\line(1,0){.9688}}
\put(26.196,13.696){\line(1,0){.9688}}
\put(28.133,13.758){\line(1,0){.9688}}
\put(30.071,13.821){\line(1,0){.9688}}
\put(32.008,13.883){\line(1,0){.9688}}
\put(109.589,13.696){\line(1,0){.9688}}
\put(111.527,13.758){\line(1,0){.9688}}
\put(113.464,13.821){\line(1,0){.9688}}
\put(115.402,13.883){\line(1,0){.9688}}
\put(58.696,13.946){\line(1,0){.9688}}
\put(60.633,14.008){\line(1,0){.9688}}
\put(62.571,14.071){\line(1,0){.9688}}
\put(64.508,14.133){\line(1,0){.9688}}
\put(142.089,13.946){\line(1,0){.9688}}
\put(144.027,14.008){\line(1,0){.9688}}
\put(145.964,14.071){\line(1,0){.9688}}
\put(147.902,14.133){\line(1,0){.9688}}
\put(33.446,30.196){\line(0,1){0}}
\put(116.839,30.196){\line(0,1){0}}
\put(5.25,27.25){\makebox(0,0)[cc]{\tiny $Y_1$}}
\put(88.643,27.25){\makebox(0,0)[cc]{\tiny $Y_1$}}
\put(21,27.25){\makebox(0,0)[cc]{\tiny $Y_2$}}
\put(104.393,27.25){\makebox(0,0)[cc]{\tiny $Y_2$}}
\put(37.75,27.25){\makebox(0,0)[cc]{\tiny $Y_i$}}
\put(37.75,13.723){\makebox(0,0)[cc]{\tiny $Z_i$}}
\put(121.143,27.25){\makebox(0,0)[cc]{\tiny $Y_i$}}
\put(53.75,27.25){\makebox(0,0)[cc]{\tiny $ Y_{i+1}$}}
\put(137.143,27.25){\makebox(0,0)[cc]{\tiny $ Y_{i+1}$}}
\put(5,13.75){\makebox(0,0)[cc]{\tiny $Z_1$}}
\put(88.393,13.75){\makebox(0,0)[cc]{\tiny $Z_1$}}
\put(20.75,13.75){\makebox(0,0)[cc]{\tiny $Z_2$}}
\put(104.143,13.75){\makebox(0,0)[cc]{\tiny $Z_2$}}
\put(53.5,13.75){\makebox(0,0)[cc]{\tiny $Z_{i+1}$}}
\put(136.893,13.75){\makebox(0,0)[cc]{\tiny $Z_{i+1}$}}
\put(154.143,14){\makebox(0,0)[cc]{\tiny $Z_{k_r}$}}
\put(4.75,5){\makebox(0,0)[cc]{ $H^1$}}
\put(88.143,5){\makebox(0,0)[cc]{ $H^1$}}
\put(20.5,5.25){\makebox(0,0)[cc]{ $H^2$}}
\put(103.893,5.25){\makebox(0,0)[cc]{ $H^2$}}
\put(37.5,5){\makebox(0,0)[cc]{ $H^i$}}
\put(120.893,5){\makebox(0,0)[cc]{ $H^i$}}
\put(52.75,5){\makebox(0,0)[cc]{ $H^{i+1}$}}
\put(136.143,5){\makebox(0,0)[cc]{ $H^{i+1}$}}
\put(71,5.5){\makebox(0,0)[cc]{ $H_{k_r}$}}
\put(154.393,5.5){\makebox(0,0)[cc]{ $H_{k_r}$}}
\put(37.75,-2){\makebox(0,0)[cc]{ (a). $Z_{k_r}=\emptyset$}}
\put(121.143,-2){\makebox(0,0)[cc]{(b). $Z_i=\emptyset$ for some $ i < k_r$}}
\put(80,-8){\makebox(0,0)[cc]{Figure 4. The graph $G$ with $Y_{k_r}=\emptyset$}}
\end{picture}
\end{center}

\vskip.8cm

 \indent \textit{(ii).} Suppose  $Z_{k_r}$ is non-empty. 
 
 Then $Z_i$ is empty for some $ i < k_r;$ see Figure 4(b).  Note that the  minimum degree of $Y_1$ and $Z_{i+1}$ is at least $h-1$ in $H^1$ and $H^{i+1}$, respectively. Observe that $S_1\supseteq N_{H^1}(Y_1)$ and $S_{i+1} \supseteq N_{H^{i+1}}(Z_{i+1}).$ Also, each vertex of $Y_1$ has a neighbour in $H^{k_r}$  and no neighbour in $Z_{k_r}.$ Hence $N_{H^{k_r}}(Y_1)$ must be in $S_{k_r}.$ Similarly, $S_i$ contains $N_{H^i}(Z_{i+1}).$ Thus $S_1\cup S_{k_r}\supseteq N_{H^1}(Y_1)\cup N_{H^{k_r}}(Y_1)=N_{H^1}(Y_1)\cup V(Y_1)=N_{H^1}[Y_1]$ and $S_i\cup S_{i+1}\supseteq N_{H^i}(Z_{i+1}) \cup N_{H^{i+1}}(Z_{i+1})= N_{H^{i+1}}(Z_{i+1})\cup V(Z_{i+1})=N_{H^{i+1}}[Z_{i+1}].$ By Lemma \ref{vertex cnb}, $|N_{H^j}[Y_j]|\geq a_{h-1}^{r-1}(2r-h)$ for $j=1,~i+1.$
 Hence $|S_1\cup S_{k_r}|=|S_i\cup S_{i+1}|\geq a_{h-1}^{r-1}(2r-h).$  By Lemma \ref{pro ahr}(1), 
 $$ |S| \geq |S_1\cup S_{k_r}|+ |S_i\cup S_{i+1}|=2a_{h-1}^{r-1}(2r-h)=a_h^r (2r-h).$$

  \indent \textit{ Subcase 2.}  Suppose that $Z_i \neq \emptyset$ for $i=1,~2, \dots,k_r.$    
  
   By the arguments similar to that in Subcase 1(i), one can easily show that $|S_1\cup S_{k_r}|\geq a_{h-1}^{r-1}(2r-h)$ and $|S_i|\geq (2r-h)a_{h-2}^{r-1}$ for $i\in \{2,3,\dots k_{r-1}\}.$  Thus, in this case also, $|S| \geq a_h^r (2r-h).$
  \vskip.2cm
   \noindent  
       \textit{Case 3.} Suppose $Y_i \neq \emptyset$  for $i=1,~2, \dots,k_r.$
   \vskip.15cm
      \noindent     
    If $Z$ does not intersect  $H^i$ for some $i,$ then the result follows by replacing $Y$ by $Z$ in Case 1 and Case 2. Suppose that $Z$ intersects  $H^i$ for all $i=1,~2, \dots, k_r.$  If $h=1,$ then the minimum degree of $Y_i$ and $Z_i$ is at least $0$ and so, by induction, $|S_i|\geq a_{0}^{r-1}(2(r-1)-0)=2r-2$ giving
        $$|S|  =  \sum\limits_{i=1}^{i=k_r}|S_i|\geq  \sum\limits_{i=1}^{i=k_r}(2r-2)\geq k_r (2r-2) \geq 4(2r-2)=8(2r-1)> 2(2r-1)=a_1^r (2r-1).$$
       Suppose $h\geq 2$.  The minimum degree of $Y_i$ and $Z_i$ is at least $h-2.$ This shows that  $S_i$ is a conditional  $(h-2)$-vertex cut of the graph $H^i$ for $i=1,~2, \dots,k_r.$ Therefore, by induction and by Lemma \ref{pro ahr}(2), we have  
 $$|S|  =  \sum\limits_{i=1}^{i=k_r}|S_i|\geq  \sum\limits_{i=1}^{i=k_r}a_{h-2}^{r-1}(2r-h)\geq k_ra_{h-2}^{r-1}(2r-h) \geq a_h^r(2r-h).$$
Thus  $|S| \geq  a_h^r (2r-h)$ in all the above cases. This complete the proof. \end{proof}
\begin{corollary}\label{vertex lb}
 $\kappa^h(G) \geq a_h^r(2r-h).$ 
\end{corollary}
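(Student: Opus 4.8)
The plan is to obtain this bound immediately from the preceding Proposition together with the definition of conditional vertex connectivity. Recall that $\kappa^h(G)$ is, by definition, the smallest cardinality of a conditional $h$-vertex cut of $G$; that is, a vertex set whose deletion disconnects $G$ and leaves every component of minimum degree at least $h$. First I would observe that the Proposition has already established, for the entire admissible range $0\le h\le 2r-2$, that every such cut $S$ satisfies $|S|\ge a_h^r(2r-h)$.

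Since this lower bound holds uniformly over all conditional $h$-vertex cuts, it holds in particular for a cut of minimum size realizing $\kappa^h(G)$. Taking the minimum over all conditional $h$-vertex cuts therefore yields $\kappa^h(G)\ge a_h^r(2r-h)$, which is exactly the assertion.

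There is no genuine obstacle at this stage: all of the combinatorial work has been front-loaded into the Proposition, whose inductive case analysis is the delicate part. There one must bound $|S|=\sum_i|S_i|$ across the copies $H^1,\dots,H^{k_r}$ according to which copies the two sides $Y$ and $Z$ of $G-S$ meet, invoking Lemma~\ref{vertex cnb}, Lemma~\ref{ahr 1}, and the arithmetic relations of Lemma~\ref{pro ahr}. The only point I would flag for completeness is that $\kappa^h(G)$ is defined only when $G$ admits a conditional $h$-vertex cut at all; this is guaranteed by Lemma~\ref{vertex ub}, which exhibits the explicit cut $N(S)$ of size $(2r-h)a_h^r$. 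That same lemma shows the bound of this Corollary is attained, so combining it with the present inequality will pin down $\kappa^h(G)=a_h^r(2r-h)$, completing the vertex-connectivity half of the Main Theorem.
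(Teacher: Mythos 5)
Your proposal is correct and matches the paper's treatment exactly: the corollary is an immediate consequence of the preceding Proposition (whose statement says ``$h$-edge cut'' but, as its proof makes clear, concerns conditional $h$-vertex cuts), since $\kappa^h(G)$ is by definition the minimum size over all such cuts. Your additional remark that Lemma~\ref{vertex ub} guarantees the existence of a conditional $h$-vertex cut, so that $\kappa^h(G)$ is well defined and the bound is in fact attained, is a sensible point of completeness that the paper leaves implicit.
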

 It follows from Corollary \ref{vertex lb} and Lemma \ref{vertex ub} that $\kappa^h(G) = a_h^r(2r-h)$ for the graph $G$ of Main Theorem \ref{main theorem}.

\section{Conditional Edge Connectivity}

In this section, we prove that the conditional edge connectivity of the graph $G$ of   Main Theorem \ref{main theorem} is same as the conditional vertex connectivity of $G.$ 

Recall that  $G = C_{k_1} \Box C_{k_2} \Box \dots \Box C_{k_r}$ with $ 4 \leq k_1 \leq k_2\leq \dots \leq k_{r}$  and $W_h^r$ is a $h$-regular subgraph of $G$ with  $a_h^r$ vertices. For a  subgraph $K$ of  $G,$ let  
$$E_r(K)=\{xy: x\in V(K) ~~\rm{and} ~~y \in V(G) -V(K) \}.$$
\begin{lemma}\label{edge ub}
For $0\leq h \leq 2r-1,$ $\lambda^h(G)\leq (2r-h)a_h^r.$
\end{lemma}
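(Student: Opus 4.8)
The plan is to exhibit a concrete $h$-edge cut of $G$ of size exactly $(2r-h)a_h^r$, namely the edge boundary $F = E_r(W_h^r)$ of the vertex set of the smallest $h$-regular subgraph $W_h^r$. This is the precise edge analogue of the choice made in Lemma \ref{vertex ub}: there one deleted the vertex neighbourhood $N(V(W_h^r))$, whereas here one deletes all edges joining $V(W_h^r)$ to the remaining vertices. It then remains to verify three things: (i) $|F| = (2r-h)a_h^r$; (ii) $G - F$ is disconnected; and (iii) every component of $G - F$ has minimum degree at least $h$.

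For (i), I would first observe that $W_h^r$ is an \emph{induced} subgraph of $G$. Indeed, in each factor $C_{k_i}$ the subgraph $W_h^r$ uses either the whole cycle or a single edge $\{j, j+1\}$; since $k_i \geq 4$, two adjacent vertices of $C_{k_i}$ span exactly one edge, so no extra edge of $G$ appears among $V(W_h^r)$ and the product structure is preserved. Hence the subgraph of $G$ induced by $V(W_h^r)$ is precisely the $h$-regular graph $W_h^r$. As $G$ is $2r$-regular, each of the $a_h^r = |V(W_h^r)|$ vertices sends exactly $2r - h$ edges into $V(G) - V(W_h^r)$, whence $|F| = |E_r(W_h^r)| = (2r-h)a_h^r$.

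For (ii), Lemma \ref{spanning} gives $(2r-h+1)a_h^r \leq k_1 k_2 \cdots k_r = |V(G)|$; since $h \leq 2r-1$ forces $2r-h+1 \geq 2$, we obtain $a_h^r \leq |V(G)|/2 < |V(G)|$, so both $V(W_h^r)$ and its complement are nonempty. As $F$ contains every edge between these two sets, $G - F$ is disconnected. For (iii), the part inside $V(W_h^r)$ is the $h$-regular graph $W_h^r$, so its minimum degree is $h$. For a vertex $y \in V(G) - V(W_h^r)$ it suffices to show that $y$ keeps at least $h$ of its $2r$ edges, i.e.\ that $y$ has at most $2r-h$ neighbours in $V(W_h^r)$. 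When $0 \leq h < 2r-1$, Lemma \ref{1 nb} says $y$ has at most one such neighbour, and $2r-h \geq 1$, so $y$ has degree at least $2r-1 \geq h$ in the complement.

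The case $h = 2r-1$ is the one genuine subtlety, as it lies outside the range of Lemma \ref{1 nb}, and it is the step I expect to be the main obstacle. Here $W_{2r-1}^r = H \Box K_2$ occupies two adjacent copies, say $H^2 \cup H^3$, in the decomposition $G = H^1 \cup \cdots \cup H^{k_r} \cup (M_1 \cup \cdots \cup M_{k_r})$. Any vertex $y$ outside these two copies lies in some $H^j$ with $j \notin \{2,3\}$; since $k_r \geq 4$, such a copy is joined to $H^2 \cup H^3$ by at most one of the matchings, so $y$ has at most one neighbour in $V(W_h^r)$ and retains degree at least $2r-1 = h$. Combining (i)--(iii), $F$ is an $h$-edge cut of $G$ of size $(2r-h)a_h^r$, so $\lambda^h(G) \leq (2r-h)a_h^r$. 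Besides this boundary case, the only point requiring care is confirming that $W_h^r$ is induced, so that the boundary count is the intended $(2r-h)a_h^r$ and not something smaller.
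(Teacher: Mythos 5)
Your proof is correct and takes essentially the same route as the paper: both delete the edge boundary $E_r(W_h^r)$, count it as $(2r-h)a_h^r$ using that $W_h^r$ is an induced $h$-regular subgraph of the $2r$-regular graph $G$, and invoke Lemma \ref{1 nb} to conclude that every component of $G-F$ other than $W_h^r$ has minimum degree at least $2r-1\geq h$. The only difference is at the boundary value $h=2r-1$: the paper applies Lemma \ref{1 nb} there even though that lemma is stated only for $h<2r-1$, whereas you correctly isolate this case and supply the short direct argument (a vertex of $H^j$ with $j\notin\{2,3\}$ has at most one neighbour in $H^2\cup H^3$ because $k_r\geq 4$), so your write-up in fact closes a small gap in the paper's own proof.
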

\begin{proof}
Let $K=W_h^r.$  Then $K$ is  $h$-regular and  $G$ is $2r$-regular. Hence $|E_r(K)|=(2r-h)|V(K)|$ and  $G-E_r(K)$ is disconnected with  $K$ is one of its components.  By Lemma \ref{1 nb},  the minimum degree of  every component of $G-E_r(K)$ other than $K$ is at least $2r-1\geq h.$  Therefore 
$E_r(K)$ contains an $h$-edge cut of $G$ giving  $\lambda^h(G)\leq |E_r(K)|=(2r-h)a_h^r.$
\end{proof}
\begin{lemma}\label{edge cnb} For a subgraph  $Y$  of $G$  of minimum degree at least $h,$ $|V(Y)|+|E_r(Y)|\geq a_h^r(2r-h+1).$
   \end{lemma}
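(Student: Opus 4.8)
The plan is to avoid repeating the long case analysis of Lemma \ref{vertex cnb} and instead derive the edge estimate directly from the vertex estimate proved there. The only genuinely new ingredient is a comparison of the edge boundary $E_r(Y)$ with the external vertex neighbourhood $N(Y)$.

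First I would record the elementary counting inequality $|E_r(Y)| \geq |N(Y)|$, valid for every subgraph $Y$ with no restriction on $h$. Indeed, sending each edge of $E_r(Y)$ to its endpoint lying in $V(G) - V(Y)$ defines a map $E_r(Y) \to N(Y)$ that is onto, since by definition every vertex of $N(Y)$ is joined to $V(Y)$ by at least one such edge; hence $|E_r(Y)| \geq |N(Y)|$. Assuming $0 \leq h \leq 2r-1$, so that Lemma \ref{vertex cnb} applies, I would then simply chain the estimates
$$|V(Y)| + |E_r(Y)| \geq |V(Y)| + |N(Y)| = |N[Y]| \geq a_h^r(2r-h+1),$$
which is exactly the claimed bound.

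The only point that deserves checking is that nothing is lost in passing from edges to neighbours, i.e. that the target $a_h^r(2r-h+1)$ is still the right (tight) constant. For $Y = W_h^r$, Lemma \ref{1 nb} guarantees that each external neighbour has exactly one neighbour in $W_h^r$, so the map above is a bijection and $|N(W_h^r)| = |E_r(W_h^r)| = (2r-h)a_h^r$, making both sides of the displayed chain equal to $(2r-h+1)a_h^r$. Thus the reduction is sharp and there is essentially no obstacle beyond the one-line counting inequality. If a self-contained argument were preferred, the same three-case induction on $r$ used in Lemma \ref{vertex cnb} transfers without change, each occurrence of $|N_{H^i}[Y_i]|$ being replaced by the corresponding sum of $|V(Y_i)|$ and the edge boundary of $Y_i$ inside $H^i$, together with the matching edges leaving $H^i$, which are counted exactly as the adjacent-copy neighbours were in the vertex proof.
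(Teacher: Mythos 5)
Your proposal is correct and follows essentially the same route as the paper: both rest on the observation that $|E_r(Y)| \geq |N(Y)|$ (each external neighbour absorbs at least one boundary edge) and then invoke Lemma \ref{vertex cnb} via $|V(Y)|+|E_r(Y)| \geq |N[Y]|$. The only cosmetic difference is that the paper treats the spanning case separately using Lemma \ref{spanning}, whereas you let Lemma \ref{vertex cnb} (whose statement already covers spanning subgraphs) absorb that case, which is equally valid.
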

\begin{proof}
If $Y$ is a spanning subgraph of $G,$ then  the edge set $E_r(Y)$ is  empty  and hence, by Lemma \ref{spanning}, $ |V(Y)|+|E_r(Y)|= k_1 k_2\dots k_r \geq a_h^r(2r-h+1).$  Suppose $Y$ is not a spanning subgraph of $G.$  Given a vertex $x$ in the neighbourhood $ N(Y)$ of $Y,$ there is at least one vertex $y\in Y$ which is adjacent to $x.$ Hence $xy\in E_r(Y).$ This implies that  $|N(Y)|\leq |E_r(Y)|.$ Hence, by Lemma \ref{vertex cnb},  $|Y|+|E_r(Y)|\geq |Y|+|N(Y)|=|N[Y]|\geq a_{h}^{r}(2r-h+1).$ 
\end{proof}
\begin{proposition}
 Let $F$ be a conditional $h$-edge cut of the graph $G.$ Then $|F| \geq a_h^r(2r-h).$  
\end{proposition}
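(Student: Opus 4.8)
The plan is to argue by induction on $r$, mirroring the proof of the conditional vertex connectivity but replacing vertex counts by edge counts and Lemma \ref{vertex cnb} by Lemma \ref{edge cnb}. The base case $r=1$ (which forces $h=0$) and the case $h=0$ both reduce to the traditional edge connectivity $\lambda^0(G)=2r=a_0^r(2r)$, so I assume $r\ge 2$ and $h\ge 1$ and write $G=H\Box C_{k_r}$ with copies $H^1,\dots,H^{k_r}$ and matchings $M_1,\dots,M_{k_r}$. Since $F$ is a conditional $h$-edge cut, $G-F$ decomposes as a spanning vertex partition $V(G)=V(Y)\sqcup V(Z)$ with $\delta(Y)\ge h$ and $\delta(Z)\ge h$; every edge joining $V(Y)$ to $V(Z)$ lies in $F$, so $|F|\ge |E_r(Y)|=|E_r(Z)|$. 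Writing $Y_i=V(Y)\cap V(H^i)$ and $Z_i=V(Z)\cap V(H^i)$, each copy satisfies $Y_i\sqcup Z_i=V(H^i)$, and I split $F$ into the within-copy edges $F_i=F\cap E(H^i)$ and the cut matching edges. The key observation is that each cut matching edge is charged to exactly one vertex of $Y$ (its $Y$-endpoint), so these edges play the same bookkeeping role that the matched neighbours played in the vertex proof.

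If every copy is mixed (both $Y_i,Z_i\neq\emptyset$), then for each $i$ every vertex of $Y_i$ keeps at least $h-2$ of its $H^i$-neighbours inside $Y_i$, since it loses at most its two inter-copy neighbours; likewise for $Z_i$. Hence, for $h\ge 2$, $F_i$ is a conditional $(h-2)$-edge cut of $H^i$, and the induction hypothesis gives $|F_i|\ge a_{h-2}^{r-1}(2r-h)$. Summing over $i$ and applying Lemma \ref{pro ahr}(2) yields $|F|\ge\sum_i|F_i|\ge k_r\,a_{h-2}^{r-1}(2r-h)\ge a_h^r(2r-h)$. For $h=1$ the same summation works with $|F_i|\ge 2r-2$, the edge connectivity of the $(2r-2)$-regular, $(2r-2)$-connected graph $H^i$ (Lemma \ref{regular}), since $k_r(2r-2)\ge 4(2r-2)\ge 2(2r-1)=a_1^r(2r-1)$ for $r\ge 2$.

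Otherwise some copy is pure, so one of $Y,Z$ misses a copy; after interchanging $Y$ and $Z$ if necessary (legitimate because $|E_r(Y)|=|E_r(Z)|$) I may assume $Y_j=\emptyset$ for some $j$. Then the copies meeting $Y$ form a proper nonempty subset of the cyclic index set, possessing a left boundary $p$ (with $Y_p\neq\emptyset$, $Y_{p-1}=\emptyset$) and a right boundary $q$ (with $Y_q\neq\emptyset$, $Y_{q+1}=\emptyset$). At such a boundary all matching edges from $Y_p$ into the adjacent $Y$-free copy are cut, contributing $|V(Y_p)|$ edges disjoint from $F_p$, while $\delta_{H^p}(Y_p)\ge h-1$; thus Lemma \ref{edge cnb} in dimension $r-1$ (its spanning case, via Lemma \ref{spanning}, covering a pure copy) gives $|F_p|+|V(Y_p)|\ge a_{h-1}^{r-1}(2r-h)$. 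If $Y$ meets at least two copies I can pick $p\neq q$ so that $F_p$, $F_q$ and the two boundary matchings are pairwise disjoint, whence $|F|\ge 2a_{h-1}^{r-1}(2r-h)=a_h^r(2r-h)$ by Lemma \ref{pro ahr}(1). If $Y$ meets exactly one copy $p$, both adjacent matchings are fully cut (contributing $2|V(Y_p)|$) and $\delta_{H^p}(Y_p)\ge h$; for $h\le 2r-3$, Lemma \ref{edge cnb} together with Lemma \ref{ahr 1} gives $|F|\ge a_h^{r-1}(2r-h-1)+|V(Y_p)|\ge a_h^{r-1}(2r-h)\ge a_h^r(2r-h)$ by Lemma \ref{pro ahr}(3), while for $h=2r-2$ we have $Y=H^p$ and $|F|=2k_1\cdots k_{r-1}\ge 8k_1\cdots k_{r-2}=a_h^r(2r-h)$.

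The main obstacle is the combinatorial bookkeeping. I must verify that, across the cyclic arrangement of copies, the within-copy cut edges $F_i$ and the cut matching edges charged to the two chosen boundaries are genuinely pairwise disjoint, so the lower bounds add without overcounting; this rests on the fact that a boundary index $p$ satisfies $p-1\notin\{\,i:Y_i\neq\emptyset\,\}$, which prevents the boundary matchings $M_{p-1}$ and $M_q$ from coinciding. I must also dispatch separately the degenerate cases $h\in\{0,1\}$ and the single-copy case $h=2r-2$, where either the conditional-cut interpretation of $F_i$ or the hypothesis of Lemma \ref{edge cnb} degenerates. Once disjointness is pinned down, the numerical steps are exactly those of the vertex argument with $E_r(\cdot)$ in place of $N(\cdot)$, and they follow verbatim from Lemmas \ref{pro ahr} and \ref{spanning}.
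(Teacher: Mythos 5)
Your proposal is correct and is essentially the paper's own argument: induction on $r$ via the decomposition $G = H\Box C_{k_r}$, a case analysis on which copies $H^i$ are met by $Y$ and $Z$ (including the same WLOG swap of $Y$ and $Z$ when one of them misses a copy), Lemma \ref{edge cnb} together with Lemmas \ref{ahr 1}, \ref{spanning} and \ref{pro ahr} for the single-copy and two-boundary cases, and the induction hypothesis summed over all copies in the all-mixed case. Your bookkeeping with $F_i = F\cap E(H^i)$ and explicitly disjoint boundary matchings, and your absorption of the extremal case $h=2r-1$ into the spanning case of Lemma \ref{edge cnb}, are only cosmetic variants of the paper's corresponding steps (the paper bounds $|E_r(Y)|\subseteq F$ directly and disposes of $h=2r-1$ by a separate count of the two full matchings).
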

\begin{proof}
Since the minimum degree of the graph $G$ is $2r,$ $ 0\leq h \leq 2r -1.$  We proceed  by induction on $r.$ The result follows trivially for $r=1.$ Suppose $r\geq 2.$ Assume the result  for Cartesian product of $r-1$ cycles. Let $F$ be a conditional $h$-edge cut of $G.$ Then $G-F$ is disconnected and every component of it has minimum degree at least $h.$ 
 Let  $Y$ be a subgraph of  $G-F$ consisting  of at least one but not all components of $G-F$  and let $Z$ be the subgraph consisting of the remaining components. Then $Y$ and $Z$ are vertex dsijoint subgraphs of $G- F$ of minimum degree at least $h$ and their union is  $G-F.$ Note that $E_r(Y)\subseteq F.$ Hence $|F|\geq |E_r(Y)|.$ Similarly, $|F|\geq |E_r(Z)|.$ 
 
 Write $G$ as $H\Box C_{k_r},$ where $H = C_{k_1} \Box C_{k_2} \Box \dots \Box C_{k_r}.$ Then $G$ is obtained by replacing vertex $i$ of the cycle $C_{k_r}$ by a copy $H^i$ of $H$ and replacing the edge  joining $i$ and $ i + 1 \pmod {k_r} $ by the perfect matching $M_i$ between the corresponding vertices of $H^i$ and $H^{i+1\pmod {k_r}}.$ Then $Y$   intersects at least one $H^i$. Similarly, $Z$ intersects  at least one $H^i$. Let   $Y_i=Y\cap H^i$ and $Z_i=Z\cap H^i$ for $ i = 1, 2, \dots, k_r.$ 
 
 For a subgraph $K$ of $G,$ let  $M_i(K)$ be the set of all edges in the matching $M_i$ each having exactly one end vertex in $K.$
\vskip.2cm
 \noindent
  {\it Case 1.} Suppose $Y_i\neq \emptyset$ for only one value of $i.$
  \vskip.15cm
   \noindent
 We may assume that $Y_1$ is non-empty. Then $Y=Y_1$ and so $Y$  is contained in the graph $H^1.$ Hence the minimum degree of $Y$ in $H^1$ is at least $h.$  Since $H^1$ is $(2r-2)$-regular, $h\leq 2r-2.$ Suppose $h=2r-2.$ Then $Y=H^1.$ Therefore  $|E_r(Y)|=|M_1|+|M_{k_r}|=2|V(H^1)|=2k_1k_2\dots k_{r-1}.$  As $4\leq k_{r-1},$ we have $$a_h^r(2r-h)=2a_h^r=2.2^{r-(r-2)}k_1k_2\dots k_{r-2}=4k_1k_2\dots k_{r-2}\leq k_1k_2\dots k_{r-2}k_{r-1}<|E_r(Y)| \leq |F|.$$
  Suppose $h<2r-2.$ Then   $E_r(Y)\supseteq E_{r-1}(Y)\cup M_1(Y)\cup M_{k_r}(Y).$  As $|M_1(Y)|=|M_{k_r}(Y)|= |V(Y)|, $  by Lemmas \ref{pro ahr}(3), \ref{ahr 1} and  \ref{edge cnb}, we have 
  $$|E_r(Y)|\geq (|E_{r-1}(Y)| + |V(Y)|) + |V(Y)| \geq a_{h}^{r-1}(2r-h-1) + a_{h}^{r-1} =  a_{h}^{r-1}(2r-h) \geq a_{h}^{r}(2r-h).$$
  \vskip.1cm
 \noindent
  {\it Case 2.}  Suppose $Y_i\neq \emptyset$ for more than one but not all values of $i.$
   \vskip.15cm
     \noindent
  We may assume that $Y_1$ is non-empty but $Y_{k_r}$ is empty.  Let $t$ be the largest integer such that $Y_t$ is non-empty. Then $1<t<k_r.$ The minimum degree of $Y_i$ in $H^i$ is at least $h-1$ for $i =1,~t.$ The graph $Y_1$ has $|V(Y_1)|$  neighbours in $H^{k_r}$ and  $Y_t$ has $|V(Y_t)|$ neighbours in $H^{t+1}.$ Hence $E_r(Y)\supseteq E_{r-1}(Y_1)\cup E_{r-1}(Y_t)\cup M_{k_r}(Y_1)\cup M_t(Y_t).$ 
  
 Suppose $h=2r-1.$ Then $Y_j=H^j$ for $j=1,~t$  giving $M_{k_r}(Y_1)=  M_{k_r}(H^1) = M_{k_r}$ and $M_t(Y_t)= M_t(H^t) = M_t.$  Hence 
  $$a_h^r(2r-h)=a_h^r= 2k_1k_2\dots k_{r-1}= |V(H^1)| + |V(H^t)|  = |M_{k_r} | + | M_t| \leq |E_r(Y)| \leq |F|.$$
    Suppose $h\leq 2r-2.$ Then $h-1\leq 2r-3$ and so, by Lemmas \ref{edge cnb} and \ref{pro ahr}(1), 
    $$|F|\geq|E_r(Y)|\geq (|E_{r-1}(Y_1)|+|V(Y_1)|)+(|E_{r-1}(Y_t)|+|V(Y_t)|) \geq  2a_{h-1}^{r-1}(2r-h)=(2r-h)a_h^r.$$  
\vskip.1cm
 \noindent
  {\it Case 3.}
   Suppose $Y_i \neq \emptyset$ for all  $i=1,~2, \dots,k_r.$
    \vskip.15cm
      \noindent
      If the graph $Z$ does not intersect $H^i$ for some $i,$  then the result follows easily by replacing $Y$ by $Z$ in Case 1 and Case 2. Suppose  $Z$ intersects  $H^i$ for all $i=1,~2, \dots,k_r.$        
  If $h=1,$ then the minimum degree of $Y_i$ and $Z_i$ is at least 0 and so, by induction we have  $$|E_r(Y)|  =  \sum\limits_{i=1}^{i=k_r}|E_{r-1}(Y_i)|\geq  \sum\limits_{i=1}^{i=k_r}(2r-2)\geq k_r (2r-2) \geq 4(2r-2)=8(2r-1)> 2(2r-1)=a_1^r (2r-1).$$    Suppose $h\geq 2.$  The minimum degree of $Y_i$ and $Z_i$ is at least $h-2.$  Therefore the edge set $E_{r-1}(Y_i)$ is a conditional $(h-2)$-edge cut of $H^i.$ By induction, $|E_{r-1}(Y_i)|\geq a_{h-2}^{r-1}(2r-h)$ for $i=1,~2, \dots,k_r.$ By Lemma \ref{pro ahr}(2),
      $$|F|\geq |E_r(Y)|  =  \sum\limits_{i=1}^{i=k_r}|E_{r-1}(Y_i)|\geq  \sum\limits_{i=1}^{i=k_r}a_{h-2}^{r-1}(2r-h)\geq k_ra_{h-2}^{r-1}(2r-h) \geq a_h^r(2r-h).$$
This completes the proof.
\end{proof}
\begin{corollary}\label{edge lb}
 $\lambda^h(G) \geq a_h^r(2r-h).$ 
\end{corollary}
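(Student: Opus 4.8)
The plan is to deduce the corollary directly from the Proposition just established, using nothing more than the definition of the conditional edge connectivity. First I would recall that, by definition, $\lambda^h(G)$ is the minimum cardinality of a conditional $h$-edge cut of $G$, and that such cuts do exist: Lemma \ref{edge ub} already exhibits one (namely the edge set $E_r(W_h^r)$), so the minimum is taken over a nonempty family and $\lambda^h(G)$ is well defined.

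Next I would invoke the Proposition immediately preceding the corollary, which asserts that \emph{every} conditional $h$-edge cut $F$ of $G$ satisfies $|F| \geq a_h^r(2r-h)$. Since this lower bound holds uniformly across the entire family of conditional $h$-edge cuts, it holds in particular for a cut of smallest cardinality; hence the minimum itself is at least $a_h^r(2r-h)$. This yields $\lambda^h(G) \geq a_h^r(2r-h)$, which is exactly the assertion of Corollary \ref{edge lb}.

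The genuine difficulty has therefore already been overcome in the Proposition, whose proof is the substantive argument: an induction on $r$ that writes $G = H \Box C_{k_r}$ and performs a careful case analysis according to how the two sides $Y$ and $Z$ of the disconnection of $G-F$ meet the copies $H^1, \dots, H^{k_r}$, drawing on Lemmas \ref{pro ahr}, \ref{ahr 1}, \ref{vertex cnb} and \ref{edge cnb}. The corollary itself introduces no new idea beyond the passage from a bound valid for all cuts to the same bound for the minimum cut, so I expect no obstacle at this stage. Finally, I would observe that combining Corollary \ref{edge lb} with the upper bound of Lemma \ref{edge ub} gives $\lambda^h(G) = a_h^r(2r-h)$; together with the equality $\kappa^h(G) = a_h^r(2r-h)$ furnished by Corollary \ref{vertex lb} and Lemma \ref{vertex ub}, this completes the proof of Main Theorem \ref{main theorem}.
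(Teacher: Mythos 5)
Your proposal is correct and matches the paper exactly: the paper offers no separate argument for Corollary \ref{edge lb}, treating it precisely as you do, namely as an immediate consequence of the preceding Proposition (every conditional $h$-edge cut has size at least $a_h^r(2r-h)$) together with the definition of $\lambda^h(G)$ as the minimum cardinality over this nonempty family of cuts. Your additional remarks on well-definedness via Lemma \ref{edge ub} and on combining with the upper bound to finish Main Theorem \ref{main theorem} also agree with the paper's concluding discussion.
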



It follows from Corollary \ref{edge lb} and Lemma \ref{edge ub} that  $\lambda^h(G) = a_h^r(2r-h),$ where $G$ is the graph of Main Theorem \ref{main theorem}.  Thus $\kappa(G) = a_h^r(2r-h) = \lambda^h(G).$ \textbf{This completes the proof of Main Theorem \ref{main theorem}.
}

It is worth to mention that the edge connectivity part of Main Theorem \ref{main theorem} proves that the following conjecture of Xu \cite{xu} holds for the classes multidimensional tori and $k$-array $r$-cubes. 
\begin{conjecture}
 Let $k,~h$ be an integers and $G$ be a connected graph with minimum degree at least $k$ and $a_h(G)$ be the minimum cardinality of a vertex set of an $h$-regular subgraph of $G.$ If $\lambda^h(G)$ exists,  then  $\lambda^h(G) \leq a_h(G)(k-h).$ 
\end{conjecture}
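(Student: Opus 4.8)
The plan is to verify the conjectured inequality for the two classes settled by Main Theorem \ref{main theorem}, namely the multidimensional torus $G = C_{k_1}\Box C_{k_2}\Box\cdots\Box C_{k_r}$ and the $k$-ary $r$-cube $Q_r^k$; the conjecture for an arbitrary connected graph is not addressed here. The first step is to read off, for such a $G$, the two parameters appearing in Xu's statement. By Lemma \ref{regular}, $G$ is $2r$-regular, so its minimum degree equals $2r$, and this is the value of $k$. By Lemma \ref{whr-reg} together with Lemma \ref{ahr 1} and the corollary following it, the subgraph $W_h^r$ is a smallest $h$-regular subgraph of $G$ and has exactly $a_h^r$ vertices; hence the quantity $a_h(G)$ of the conjecture equals $a_h^r$.

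With these identifications the conjectured bound reads $\lambda^h(G)\le (2r-h)a_h^r$, which is precisely the upper bound already established in Lemma \ref{edge ub}. I would therefore invoke that lemma directly. Its content is the construction $K=W_h^r$: since $K$ is $h$-regular while $G$ is $2r$-regular, the set $E_r(K)$ of edges leaving $K$ has size $(2r-h)|V(K)| = (2r-h)a_h^r$, and Lemma \ref{1 nb} ensures that each vertex outside $K$ has at most one neighbour in $K$, so deleting $E_r(K)$ leaves every other component with minimum degree at least $2r-1\ge h$. Hence $E_r(K)$ contains an $h$-edge cut, and by the definition of $\lambda^h$ we obtain $\lambda^h(G)\le (2r-h)a_h^r$ for every $h$ in the range for which $\lambda^h(G)$ exists. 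Substituting $k=2r$ and $a_h(G)=a_h^r$ gives $\lambda^h(G)\le (k-h)\,a_h(G)$, which is exactly the conjectured inequality.

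The step I expect to carry the real content is the verification that $E_r(W_h^r)$ is genuinely an $h$-edge cut, and this is where Lemma \ref{1 nb} is indispensable: for a general graph the edge boundary of a smallest $h$-regular subgraph can strip a neighbouring vertex of too many edges and destroy the minimum-degree condition on the complement, which is precisely the obstruction to proving the conjecture in full generality. For the torus the \emph{at most one neighbour} property removes this obstruction cleanly. The only remaining point is the combinatorial identity $a_h(G)=a_h^r$, whose nontrivial direction, that no $h$-regular subgraph can have fewer than $a_h^r$ vertices, is supplied by the inductive counting argument of Lemma \ref{ahr 1}; note that for these $2r$-regular graphs equality $\lambda^h(G)=(k-h)a_h(G)$ actually holds when $0\le h\le 2r-2$, so the conjecture is met with room to spare only in the degenerate sense that the bound is attained.
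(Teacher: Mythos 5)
Your proposal is correct and takes essentially the same approach as the paper: the paper likewise does not prove the conjecture for arbitrary graphs, but remarks that the edge-connectivity part of Main Theorem \ref{main theorem} --- that is, the upper bound of Lemma \ref{edge ub}, whose construction $K=W_h^r$ relies on Lemma \ref{1 nb} exactly as you describe --- verifies it for multidimensional tori and $k$-ary $r$-cubes. Your identifications $k=2r$ (Lemma \ref{regular}) and $a_h(G)=a_h^r$ (Lemmas \ref{whr-reg} and \ref{ahr 1}) are precisely the implicit steps behind the paper's remark.
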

\vskip.2cm\noindent
\textbf{Concluding Remarks:} 
\vskip.15cm\noindent
We determined the conditional $h$-vertex connectivity and the conditional $h$-edge connectivity of a Multidimensional Torus $G$ which is  the Cartesian product of $r$ cycles each of length at least four, for all possible values of $h.$  We first explored properties of a  $h$-regular subgraph of $G$  of smallest size and then established that  both these conditional conectivites are equal to $(2r - h)$ times the size of this subgraph. 


\begin{center}
{\bf Acknowledgement} 
\end{center}
The second author is financially supported by DST-SERB, Government of India through the project MTR/2018/000447.


\begin{thebibliography}{99}
   \bibitem{bs} {Y. M. Borse and S. R. Shaikh, On 4-regular, 4-connected, bipancyclic subgraphs of hypercubes,  { Discrete Math. Algorithms Appl.} {\bf 3} (2017)  Article 1750032, 12 pages.}
    \bibitem{ch} {X. -B. Chen, Panconnectivity and edge-pancyclicity of multidimensional torus,  { Discrete Appl. Math.} {\bf 178} (2014), 33-45.}
      \bibitem{duksu} {A. Duksu and H. Choi, Generalized  measure of fault tolerence in $n$-cube networks, {  IEEE Trans. Parallel Distrib. Syst.}  {\bf 4}(6) (1993), 702-703.}
    \bibitem{esfa 89} {A. H. Esfahanian, Generalized measures of fault tolerance with application to $N$-cube networks,  { IEEE Trans. Comput.} {\bf 38} (1989), 1586-1591.}
     \bibitem {esfa 88} {A. H. Esfahanian and S. L. Hakimi, On computing conditional edge-connectivity of a graph, { Inform. Processing Lett.} {\bf 27} (1988), 195-199.}
     
     \bibitem{harary} {F. Harary, Conditional connectivity, { Networks} {\bf 13} (1983), 346-357.}
     
     \bibitem{xu} {J. -M. Xu, On conditional edge-connectivity of graphs, { Acta. Math. Appl. Sin.} {\bf 16} (2000), 414-419.}
     
     \bibitem{le} {F. T. Leighton,  {Introduction to Parallel Algorithms and Architectures: Arrays, Trees, Hypercube,} Morgan Kaufmann, San Mateo, CA, 1992.}
     \bibitem {latifi} {S. Latifi, M. Hegde and M. Naraghi-pour, Conditional connectivity measures for large multiprocessor systems, { IEEE Trans. Comput.} {\bf 43} (1994), 218-222.}
     \bibitem{li1} {X. -J. Li and J. -M. Xu, Edge-fault tolerance of hypercube-like networks, { Inform. Processing Lett.} {\bf 113} (2013), 760-763.}
     \bibitem{li2} {X. -J. Li, Y. -N. Guan, Z. Yan and J. -M. Xu, On fault tolerance of $(n,k)$-star networks, { Theor. Comput. Sci.} \textbf{704} (2017), 82-86.}
     \bibitem{lw} {S. Lin, S. Wang and C. Li, Panconnectivity and edge-pancyclicity of $k$-ary $n$-cubes with faulty elements, { Discrete Appl. Math.} {\bf 159} (2011),   212-223.}
      \bibitem{ning} {W.  Ning, The $h$-connectivity of exchanged crossed cube, { Theor. Comput. Sci.} {\bf 696} (2017), 65-68. }
      
       \bibitem{wei} {C. -C. Wei and S. -Y. Hsieh, $h$-restricted connectivity of locally twisted cubes, { Discrete Appl. Math.} {\bf 217}(2) (2017), 330-339.}
     \bibitem{xxh} {M. Xu, J. -M. Xu, X. -M. Hou, Fault diameter of Cartesian product  graphs, {  Inform. Process. Lett.} {\bf 93} (2005), 245-248.}
      
      \bibitem{ye} {L. Ye and J. Liang, On conditional $h$-vertex connectivity of some networks, { Chinese Journal of Electronics} {\bf 25} (2016), 556-560 .}
  



 \end{thebibliography}
\end{document}